\numberwithin{equation}{section}
\newtheorem{theorem}{Theorem}
\newtheorem{lemma}{Lemma}
\newtheorem{remark}{Remark}
\newtheorem{example}{Example}
\newtheorem{definition}{Definition}
\newtheorem{proposition}{Proposition}
\begin{document}

 \title{Correctness of the definition of the Laplace operator with delta-like potentials}
 
 \author{$^{1}$B. Kanguzhin}
 \email{kanguzhin53@gmail.com}
\address{Al-Farabi Kazakh National University, 050040, Almaty, Kazakhstan;
Institute of Mathematics and Mathematical Modeling, 050010, Almaty, Kazakhstan.}

\author{$^{2}$K. Tulenov}
\email{tulenov@math.kz}
\address{Al-Farabi Kazakh National University, 050040, Almaty, Kazakhstan;
Institute of Mathematics and Mathematical Modeling, 050010, Almaty, Kazakhstan.
}

\thanks{Corresponding authors: $^{1,2}$Al-Farabi Kazakh National University, 050040, Almaty, Kazakhstan;
$^{1,2}$Institute of Mathematics and Mathematical Modeling, 050010, Almaty, Kazakhstan. email:$^{1}$kanguzhin53@gmail.com and $^{2}$tulenov@math.kz }

\subjclass[2010]{58J32, 35J05, 35J56,35J08. }
\keywords{Laplace operator, Dirichlet problem, maximal operator,
delta-like potential, resolvent, correctness, pointwise perturbation.}

\begin{abstract} In this paper, we give a correct definition of the Laplace operator with delta-like potentials.
Correctly solvable pointwise perturbation is investigated and formulas of resolvent are described. We study some properties of the resolvent. In particular, we prove Krein's formula for these resolvents.
\end{abstract}

 \maketitle

\section{Introduction}
 The main goal of this paper is to give a correct definition of formally defined operator via $-\Delta+\delta_{s},$ where $\Delta$ is Laplacian and
 $\delta_{s}$ is Dirac's delta function. A number of works have been devoted to this classical problem of Mathematical Physics. There are a lot of approaches in solving such a problem. One of the main approaches, based on the theory of Neumann on extensions of symmetric operators, is so called expansion to a wider space \cite{Pav}. Applications of Neuman's scheme of the theory of extensions, as a rule, are limited to a circle of abstract operators in a Hilbert space. In this note
we present a method of restrictions of previously well-defined maximal operator. The method of restrictions of maximal operator is the dual to the method of extensions of minimal operator. In the theory of extensions of the minimal operator, it is usually preferred to act in terms of the boundary form, i.e.
$$<B_{0}^{*}u,v>-<u,B_{0}^{*}v>,$$
where $B_{0}$ is the minimal symmetric operator. Further progress in various directions can be found in \cite{AK, Ben,BSSh, Frag, Kur, Shkal} from different points of view. A systematic application of non-standart analysis in the theory of point interactions can be found in \cite{AGHH}.

In this work the initial operator is the maximal operator $B_{M}$ which can be considered as the adjoint of the minimal operator $B_{0}$. After defining correctly the closed maximal operator $B_{M}$, its everywhere solvable restrictions will be constructed. We act in terms of the boundary form $<B_{M}u,v>-<u, B_{M}v>,$ when constructing the restrictions of the maximal operator. Hence, at the final stage, self-adjoint operators will be selected from everywhere solvable restrictions.

An alternative approach for the one-dimensional Sturm-Liouville operator with a potential $v(x),$ which is singular distribution of first order, was given in \cite{Shkal}. In particular, operators generated by a differential expression $-\frac{d^{2}}{dx^{2}}+v(x)$ were investigated, when $v(x)=\frac{da(x)}{dx}$ and
$a(x)\in L_{2}(0,1).$ Authors of \cite{Shkal} introduced definition of domain of the maximal operator as follows
$$D(B)=\{y(x),y^{[1]}(x)\in W_{2}^{1}[0,1]:-(y^{[1]})'-ay^{[1]}-a^{2}y\in
L_{2}(0,1)\},$$
where $y^{[1]}(x)=y'(x)-a(x)y(x)$ and $W_{2}^{1}[0,1]$ is Sobolev space. In case $v(x)=\delta
(x-x_{0})$, where $x_{0}\in(0,1),$ it is convenient to denote the domain $D(B)$ by $W_{2,\gamma}^{2}(\Omega_{0})$. Here
$\Omega_{0}=(0,1)\backslash \{x_{0}\},$
\begin{align*}&\gamma_{1}(y)=\lim_{\delta\rightarrow
0}(y(x_{0}+\delta)-y(x_{0}-\delta)),\\
&\gamma_{2}(y)=\lim_{\delta\rightarrow
0}(y'(x_{0}+\delta)-y'(x_{0}-\delta)).
\end{align*}
In this paper $D(B)$ is defined as follows
\begin{align*}&W_{2,\gamma}^{2}(\Omega_{0})=\{y\in
W_{2,loc}^{2}(\Omega_{0}):\gamma_{1}(y)=0,\gamma_{2}(y)=y(x_{0}),\\
&-(y^{[1]})'(x)-\frac{1}{2}sign(x)y^{[1]}(x)-\frac{1}{4}y(x)\in L_{2}(0,1)\}.
\end{align*}
Consequently, in \cite{Shkal}, authors constructed a well-defined restriction of the maximal operator $B$ in a standard way. Therefore, to construct maximal operator $B$ we need to introduce two specific functionals $\gamma_{1}(\cdot)$ and $\gamma_{2}(\cdot),$ and to define correctly domain of such operator via these functionals. If for $y\in W_{2,loc}^{2}(\Omega_{0})$ the values of functionals
$\gamma_{1}(y)$ and $\gamma_{2}(y)$ are finite, then the following equality holds
$$-(y^{[1]})'(x)-\frac{1}{2}sign(x)
y^{[1]}(x)-\frac{1}{4}y(x)=-\frac{d^{2}}{dx^{2}}(y(x)-\gamma_{2}(y)G(x,x_{0})),$$ whenever
 $\gamma_{1}(y)=0.$ Here $G(x,t)$ is the Green's function of the Dirichlet problem
\begin{align*} &u''(x)=f(x), \,\,\,\ 0<x<1\\
&u(0)=0, \,\,\ u(1)=0.
\end{align*}
in the space $L_{2}(0,1).$
If $y\in W_{2,\gamma}^{2}(\Omega_{0}),$ then
$$-\frac{d^{2}}{dx^{2}}(y(x)-\gamma_{2}(y)G(x,x_{0}))\in L_{2}(0,1). $$
In \cite{BSSh}, a correct definition of $-\Delta+\delta_{s}$ was given in $L_{2}(\mathbb{R}^{2})$ by a corresponding quadratic form defined in Sobolev space $H^{1}(\mathbb{R}^{2}).$ In \cite{Frag}, operators of the type $(-\Delta)^{m}+\delta_{s}$ defined correctly both on spaces $H^{2m}(\mathbb{R}^{n})$ and $H^{m}(\mathbb{R}^{n}).$ In addition, in the same paper, self-adjoint extension of the operator $(-\Delta)^{m}+\delta_{s}$ is given for wider classes than $H^{m}(\mathbb{R}^{n}).$ Calculations in \cite{Frag} are based on the results of \cite{BF}.
In this paper, we introduce a correct definition of the operator
$-\Delta+\delta_{s}$ in $L_{2}(\Omega),$ where $\Omega$ is a bounded set in $\mathbb{R}^{d}$
with smooth boundary. In order to reduce technical details, we set out our approach when $S$ is a singleton and $\Omega$ is a unit ball in $\mathbb{R}^{d}.$ Unlike \cite{BSSh}, in this paper the definition of the operator
$-\Delta+\delta_{s}$ is defined directly with no use of quadratic forms. Our definition has some own
advantages. In particular, it allows us to study properties
of resolvents of the operators in more detail. As a consequence, we may obtain some results on discrete spectrum of operators introduced in this work. The difference from \cite{Shkal} is that we investigate multi-dimensional case, i.e. $d>1.$ However, we consider potentials are represented by delta-function only, while in \cite{Shkal} authors considered potentials that can be singular distributions of the first order. Another difference of this paper from \cite{Shkal} is that the domain of the maximal operator contains functions which are not in $L_{2}(\Omega).$ Questions related to this paper are partially studied in \cite{KA,KTN,KT1,KT2} for the Laplace and polyharmonic operators. In particular, first regularised trace formulas for the Laplace operator were obtained in \cite{KT3}.

The structure of this paper is as follows.
Section 2 recalls the definition of maximal operator $B_{M}.$ Moreover, in Theorem \ref{B_{M} oper} we show the correct restriction of the maximal operator $B_{M}.$ It means problem \eqref{2.5}-\eqref{2.7} below has a unique solution in the punctured area $\Omega_{0}=\Omega\setminus S,$  where $S$ is singleton. Section 3 deals with correctly solvable pointwise perturbations. In section 4, formulas of resolvents of correctly solvable pointwise perturbations are given. As an application we obtain an analogue of M.G. Krein's formula for the resolvents. In section 5, we obtain some results when we change of initial part of spectrum of the Laplace operator.

\section{Definition of maximal operator}

In case $d=1$ and $S=\{x_{0}\},$ where $0<x_{0}<1,$ the definition of the domain of maximal operator defined in the previous section is given as follows:
\begin{align*} &D(B)=\{y\in W_{2,loc}^{2}(\Omega_{0}): \gamma_{1}(y)=0,
\gamma_{2}(y)=y(x_{0}),\\
&-\frac{d^{2}}{dx^{2}}(y(x)-\gamma_{2}(y)G(x,x_{0}))\in L_{2}(0,1)\},
\end{align*}
Moreover, we have
$By(x)=-\frac{d^{2}}{dx^{2}}(y(x)-\gamma_{2}(y)G_{1}(x,x_{0})),$
where $$G_{1}(x,x_{0})=\left\{
                                                                                              \begin{array}{ll}
                                                                                                x_{0}(1-x),\,\
                                                                                                $при$
                                                                                                \,\,\
                                                                                                x_{0}<x,
                                                                                                &
                                                                                                \\
                                                                                               x(1-x_{0}),
                                                                                               \,\
                                                                                               $при$
                                                                                               \,\,\
                                                                                               x\leq
                                                                                               x_{0}.&
                                                                                              \end{array}
                                                                                            \right.$$
Then a correct restriction of the operator $B$ is considered in terms of invertible operators, generated by operations
$-\frac{d^{2}}{dx^{2}}+\delta(x-x_{0}).$ In case $d>2$ and
$S=\{x_{0}\},$ where $x_{0}\in\Omega\subset\mathbb{R}^{d},$ first we need to define the domain of maximal operator, generated by operation $-\Delta+\delta_{s}(x),$ and then we will study its correct restriction.
For this purpose, we use the following well known facts.
\begin{theorem}\label{D prob} The solution of the Dirichlet problem for non-homogeneous harmonic equation
\begin{equation}\label{dirichlet}
\Delta w(x)=f(x), \,\,\ |x|<1
\end{equation}
with boundary condition
\begin{equation}\label{dirichlet cond}w\mid_{|x|=1}=0
\end{equation} is given by the following formula
\begin{equation}\label{dirichlet cond'}
w(x)=\int_{\Omega}G(x,\xi)f(\xi)d\xi,
\end{equation}
where
$G(x,\xi)=C\cdot(|x-\xi|^{2-d}-|\xi|^{2-d}\cdot|x-\frac{\xi}{|\xi|^{2}}|^{2-d})$ and $C$ is some constant.
\end{theorem}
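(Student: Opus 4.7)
The plan is to verify that the kernel $G(x,\xi)$ exhibited in the statement is indeed the Dirichlet Green's function of the Laplacian on the unit ball, and then conclude that the representation formula \eqref{dirichlet cond'} solves \eqref{dirichlet}--\eqref{dirichlet cond}. The construction is the classical method of images based on Kelvin inversion through the unit sphere.

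First, I would recall that for $d\ge 3$ the fundamental solution of $-\Delta$ in $\mathbb{R}^{d}$ is $\Phi(x)=c_{d}|x|^{2-d}$ for a suitable dimensional constant $c_{d}$, satisfying $\Delta\Phi(x-\xi)=-\delta(x-\xi)/c_{d}'$ in the distributional sense. Choosing the constant $C$ in the statement accordingly, the first term $C\,|x-\xi|^{2-d}$ has Laplacian (in $x$) equal to $\delta(x-\xi)$. Next, I would observe that the image point $\xi^{\ast}:=\xi/|\xi|^{2}$ lies outside the closed unit ball whenever $\xi\in\Omega$, $\xi\ne 0$, so that the second term $|\xi|^{2-d}|x-\xi^{\ast}|^{2-d}$ is a smooth harmonic function of $x$ on $\overline{\Omega}$. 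Consequently $\Delta_{x}G(x,\xi)=\delta(x-\xi)$ in $\mathcal{D}'(\Omega)$.

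The remaining nontrivial point is the boundary condition. For $|x|=1$ one checks by direct algebra the identity
\[
|\xi|^{2}\,\bigl|x-\tfrac{\xi}{|\xi|^{2}}\bigr|^{2}=|x|^{2}|\xi|^{2}-2x\cdot\xi+1=|x-\xi|^{2},
\]
which is the geometric content of inversion through the unit sphere. Raising to the power $(2-d)/2$ gives $|\xi|^{2-d}|x-\xi^{\ast}|^{2-d}=|x-\xi|^{2-d}$ on $|x|=1$, whence $G(x,\xi)=0$ there. Together with the preceding paragraph this shows $G$ is the Dirichlet Green's function on the unit ball.

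To finish, I would substitute $w$ defined by \eqref{dirichlet cond'} and differentiate under the integral sign, which is justified for sufficiently regular $f$ (e.g.\ $f\in C^{\alpha}(\overline{\Omega})$, or by an approximation argument for $f\in L_{2}(\Omega)$): the identity $\Delta_{x}G(x,\xi)=\delta(x-\xi)$ yields $\Delta w(x)=f(x)$ in $\Omega$, and the vanishing of $G$ on $\partial\Omega$ yields $w|_{|x|=1}=0$. The only genuine obstacle is the standard one, namely the singularity of the kernel along the diagonal $x=\xi$; this is handled in the usual way by splitting the integral into a small ball around $x$ (where one uses the Newtonian potential estimates for $\Phi$) and its complement (where $G(x,\cdot)$ is smooth), exactly as in the classical proof for the Newtonian potential. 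Since this is a well-established calculation, I would simply invoke it rather than reproduce it here.
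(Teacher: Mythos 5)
Your proof is correct and follows essentially the same method-of-images argument as the paper: the first term is (a multiple of) the fundamental solution, the image term $|\xi|^{2-d}\,|x-\xi/|\xi|^{2}|^{2-d}$ is harmonic in $\Omega$ because the reflected point lies outside the closed ball, and $G$ vanishes on $|x|=1$. The only real difference is in presentation: you check the boundary condition directly from the inversion identity $|\xi|^{2}\,|x-\xi/|\xi|^{2}|^{2}=|x-\xi|^{2}$ on $|x|=1$, whereas the paper obtains it from the identity $X^{2}=Y^{2}-Z^{2}$ with $Z^{2}=(1-|x|^{2})(1-|\xi|^{2})$ via a binomial expansion in $Z^{2}/Y^{2}$; you also sketch the standard verification that the potential $w(x)=\int_{\Omega}G(x,\xi)f(\xi)\,d\xi$ actually solves the problem, a step the paper leaves implicit.
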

{\bf Discussion of the Theorem \ref{D prob}.} Theorem \ref{D prob} claims that the Green function of the Dirichlet problem in the ball $\Omega,$ whenever $d>2,$ can be written out explicitly.
\begin{proof} It is well known from (\cite{Vlad}, p. 202) that the fundamental solution of harmonic equation in the unit ball
$\Omega=\{x\in\mathbb{R}^{d}:|x|<1\}$ is represented as
\begin{equation}\label{fund solution}\varepsilon(x,\xi)=C_{d}\cdot|x-\xi|^{2-d},
 \end{equation} where $C_{d}=-\frac{1}{(d-2)\sigma_{d}}$ and $\sigma_{d}$ is the area of $(d-1)$-dimensional surface of the unit sphere
$\partial\Omega$ in $\mathbb{R}^{d}.$ Set
\begin{align*} &X^{2}=|x-\xi|^{2}, \,\
Y^{2}=|\xi|^{2}\cdot|x-\frac{\xi}{|\xi|^{2}}|^{2},\\
&Z^{2}=(1-|x|^{2})\cdot(1-|\xi|^{2}).
\end{align*}
It is easy to prove $X^{2}\equiv
Y^{2}-Z^{2}.$ In particular, it follows that $Y^{2}\geq Z^{2}.$ Hence, the fundamental solution can be written as follows
\begin{align*} &\varepsilon(x,\xi)\stackrel{\eqref{fund solution}}=C_{d}\cdot
X^{2-d}=C_{d}\cdot(Y^{2}-Z^{2})^{\frac{2-d}{2}}=C_{d}\cdot\big(Y^{2}(1-\frac{Z^{2}}{Y^{2}})\big)^{\frac{2-d}{2}}\\
&=C_{d}\cdot Y^{2-d}(1-\frac{Z^{2}}{Y^{2}})^{1-\frac{d}{2}}=C_{d}\cdot
Y^{2-d}\big(1-\frac{2-d}{2}\cdot\frac{Z^{2}}{Y^{2}}+\frac{2-d}{2}\cdot\frac{1-d}{2}\cdot\frac{1}{2}\frac{Z^{4}}{Y^{4}}+...\big).
\end{align*}
Therefore, we obtain
\begin{equation}\label{green}C_{d}\cdot X^{2-d}-C_{d}\cdot Y^{2-d}=-C_{d}\cdot
 Y^{2-d}\big(\frac{2-d}{2}\cdot\frac{Z^{2}}{Y^{2}}-\frac{(2-d)(1-d)}{8}\cdot\frac{Z^{4}}{Y^{4}}+...\big)
\end{equation}
 We denote the left hand side of the identity \eqref{green} by $G(x,\xi),$ then
 $G(x,\xi)=C_{d}\cdot(X^{2-d}-Y^{2-d}).$ First, we show that the function $G(x,\xi)$
 is the Green function of Dirichlet problem for non-homogenous harmonic equation in a ball $\Omega.$ Since $\frac{\xi}{|\xi|^{2}}\in\Omega,$ it follows that $\Delta_{x}Y^{2-d}=0.$ Hence, by definition of the fundamental solution, we have $$\Delta_{x}G(x,\xi)=\Delta_{x}\big(\varepsilon(x,\xi)-C_{d}\cdot
 Y^{2-d}\big)=\delta(x,\xi).$$
Therefore,
 $G(x,\xi)$ is a solution of the differential equation $\Delta_{x}G(x,\xi)=\delta(x,\xi).$
 On the other hand, from \eqref{green} it follows that
 $$G(x,\xi)=-C_{d}\cdot
 Y^{2-d}\big(\frac{2-d}{2}\cdot\frac{Z^{2}}{Y^{2}}-\frac{(2-d)(1-d)}{8}\cdot\frac{Z^{4}}{Y^{4}}+...\big).$$
 Since $Z^{2}=(1-|x|^{2})(1-|\xi|^{2}),$ the trace on the boundary
 $Z^{2}\mid_{\partial\Omega}$ equals to zero. This implies the function  $G(x,\xi)$ on the boundary
 $\partial\Omega$ satisfies the Dirichlet boundary condition
 $$G(x,\xi)\mid_{x\in \partial\Omega}=0.$$ This completes the proof.
  \end{proof}

Let $\Omega$ be an open unit ball and let $x^{0}\in\Omega.$ We denote by $\Omega_{0}$ an open unit ball $\Omega$ without one fixed point $x^0,$ i.e. $\Omega_{0}:=\Omega\setminus \{x^{0}\}.$ For $\delta>0$ we denote the ball with radius $\delta$ and center at $x^{0}$ by $\Pi_{\delta}^{0}=\{x\in\Omega:|x-x^{0}|\leq \delta\}.$ For $s=0,1,2,...,d,$ define the following functionals
\begin{equation}\label{functional}\gamma_{0}(h)=-\lim_{\delta\rightarrow
0+}\int_{\partial\Pi_{\delta}^{0}}\frac{\partial}{\partial\nu_{t}}h(t)dS_{t}, \,\
\gamma_{s}(h)=d\cdot\lim_{\delta\rightarrow
0+}\int_{\partial\Pi_{\delta}^{0}}\frac{(t_{s}-x_{s}^{0})}{|t-x^{0}|}h(t)dS_{t}
\end{equation}
for some functions $h.$ Throughout this paper we denote
by $\frac{\partial}{\partial \nu_{\xi}}$ the normal derivative along the boundary $\partial\Pi_{\delta}^{0}$ at a point $\xi.$
We denote  by $W_{2,\gamma}^{2}(\Omega_{0})$ the elements $h$
in $W_{2,loc}^{2}(\Omega_{0})$ for which $\omega\mid_{\partial\Omega}=0$ with the values of
$\gamma_{0}(h),\gamma_{1}(h),...,\gamma_{d}(h)$  being finite, and satisfying
$$\Delta_{x}\big(h(x)-\gamma_{0}(h)\cdot G(x,x^{0})-\gamma_{1}(h)\cdot \frac{\partial
G(x,x^{0})}{\partial\xi_{1}}-...-\gamma_{d}(h)\cdot \frac{\partial
G(x,x^{0})}{\partial\xi_{d}}\big)\in L_{2}(\Omega).$$
The next Lemma shows that the space $ W_{2,\gamma}^{2}(\Omega_{0})$ can be larger than
$W_{2}^{2}(\Omega).$

\begin{lemma}\label{basic lem} For any $s=1,2,...,d,$ we have
$$G(x,x^{0})\in W_{2,\gamma}^{2}(\Omega_{0}), \,\, \frac{\partial
G(x,x^{0})}{\partial\xi_{s}}\in W_{2,\gamma}^{2}(\Omega_{0}).$$
Moreover,
$$\gamma_{0}(G)=-1, \, \gamma_{s}(G)=0, \, \gamma_{0}\big(\frac{\partial G}{\partial
\xi_{j}}\big)=0, \, \gamma_{s}\big(\frac{\partial G}{\partial \xi_{j}}\big)=\delta_{sj},$$
where $\delta_{sj}$ is the Kronecker symbol.
\end{lemma}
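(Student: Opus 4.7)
The plan is to split $G(x, x^{0}) = \varepsilon(x, x^{0}) + R(x, x^{0})$ according to \eqref{green}, where $\varepsilon(x, x^{0}) = C_{d}|x - x^{0}|^{2-d}$ is the fundamental solution and $R(x, x^{0}) = -C_{d}|x^{0}|^{2-d}|x - x^{0}/|x^{0}|^{2}|^{2-d}$ is the reflected ``image'' piece. Since the image pole $x^{0}/|x^{0}|^{2}$ lies outside $\overline{\Omega}$, the function $R$ and all of its $x$-partial derivatives of any order are bounded on a neighbourhood of $x^{0}$. The same decomposition applies to $\partial G/\partial\xi_{j}$: its singular part equals $C_{d}(2-d)(x^{0}_{j} - x_{j})|x - x^{0}|^{-d}$, and its regular part is still smooth in $x$ near $x^{0}$. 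Since any function bounded on a neighbourhood of $x^{0}$ integrated against a bounded surface density on $\partial\Pi_{\delta}^{0}$ contributes at most $O(\delta^{d-1}) \to 0$, each of the four limits is driven entirely by the singular parts.

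The singular contributions I plan to evaluate by parametrizing $\partial\Pi_{\delta}^{0}$ through $t = x^{0} + \delta\omega$ with $\omega \in S^{d-1}$, and invoking the standard sphere identities
$$\int_{S^{d-1}} \omega_{j}\, d\omega = 0, \qquad \int_{S^{d-1}} \omega_{s}\omega_{j}\, d\omega = \frac{\sigma_{d}}{d}\,\delta_{sj}.$$
Using the simplification $C_{d}(2-d) = 1/\sigma_{d}$ coming from \eqref{fund solution}, the functional $\gamma_{0}(G)$ reduces to integrating a constant normal derivative over the sphere and picks up the area $\sigma_{d}\delta^{d-1}$, producing the value $-1$ (up to the normal orientation built into the definition). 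The functionals $\gamma_{s}(G)$ and $\gamma_{0}(\partial_{\xi_{j}} G)$ both yield integrands with an odd factor $\omega_{\ell}$ and so vanish by the first identity. Finally, $\gamma_{s}(\partial_{\xi_{j}} G)$ produces an integrand proportional to $\omega_{s}\omega_{j}$, and the second identity extracts the Kronecker symbol $\delta_{sj}$.

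Membership in $W_{2,\gamma}^{2}(\Omega_{0})$ then rests on four observations: (i) local Sobolev regularity on $\Omega_{0}$ is immediate from real-analyticity of $G$ and of $\partial_{\xi_{j}} G$ away from $x^{0}$; (ii) the Dirichlet trace vanishes because $G(x, \xi) \equiv 0$ for $x \in \partial\Omega$, an identity that may be differentiated in $\xi$ to cover $\partial_{\xi_{j}} G$; (iii) finiteness of $\gamma_{0}, \gamma_{1}, \ldots, \gamma_{d}$ is exactly the content of the limits just evaluated; and (iv) for the $L_{2}$-Laplacian clause, inserting the computed values into the combination $h - \gamma_{0}(h) G - \sum_{s=1}^{d} \gamma_{s}(h)\,\partial_{\xi_{s}} G$ produces a function whose distributional singularity at $x^{0}$ is annihilated (the coefficients were designed to read off precisely the $G$- and $\partial_{\xi_{s}} G$-components), so the remainder is harmonic near $x^{0}$ and lies in $L_{2}(\Omega)$. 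The main obstacle I anticipate is sign bookkeeping: the orientation of the normal on $\partial\Pi_{\delta}^{0}$, the identity $\partial_{\xi_{j}}|x-\xi|^{2-d} = -\partial_{x_{j}}|x-\xi|^{2-d}$, and the interplay of the negative $C_{d}$ with the negative factor $(2-d)$ all conspire in the cancellations, and a single mis-applied sign flips $\delta_{sj}$ to $-\delta_{sj}$ or toggles the sign of $\gamma_{0}(G)$; I therefore intend to carry out each computation with the spherical coordinates written out in full and to cross-check the four answers against the requirement that the subtracted combination in (iv) actually removes the singular mode.
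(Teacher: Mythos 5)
Your route is the paper's route: split $G(\cdot,x^{0})=\varepsilon(\cdot,x^{0})+k(\cdot,x^{0})$, where the image term is exactly the paper's compensating function $k$, kill the smooth part because a bounded integrand over $\partial\Pi_{\delta}^{0}$ contributes $O(\delta^{d-1})\to 0$ to each functional \eqref{functional}, and evaluate the singular contributions via $t=x^{0}+\delta\eta$, $\eta\in S^{d-1}$, together with the moments $\int_{S^{d-1}}\eta_{j}\,dS_{\eta}=0$ and $\int_{S^{d-1}}\eta_{s}\eta_{j}\,dS_{\eta}=\tfrac{\sigma_{d}}{d}\delta_{sj}$. Your items (i)--(iv) merely make explicit the membership checks that the paper leaves implicit, so in substance the two arguments coincide.

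There is, however, one concrete point where your write-up does not close, and it is exactly the sign you flagged. You identify the singular part of $\partial G/\partial\xi_{j}$ as $C_{d}(2-d)(x_{j}^{0}-x_{j})|x-x^{0}|^{-d}$, which is the honest derivative in the second argument; but inserting this into $\gamma_{s}(h)=d\lim_{\delta\to 0}\int_{\partial\Pi_{\delta}^{0}}\frac{t_{s}-x_{s}^{0}}{|t-x^{0}|}h(t)\,dS_{t}$ gives $-\frac{d}{\sigma_{d}}\int_{S^{d-1}}\eta_{s}\eta_{j}\,dS_{\eta}=-\delta_{sj}$, not the $+\delta_{sj}$ you announce. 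The paper obtains $+\delta_{sj}$ because its computation takes $\frac{\partial\varepsilon(t,x^{0})}{\partial\xi_{j}}=C_{d}(2-d)(t_{j}-x_{j}^{0})|t-x^{0}|^{-d}$, i.e. it differentiates the kernel in the first argument (equivalently, it omits the minus in $\partial_{\xi_{j}}|t-\xi|^{2-d}=-\partial_{t_{j}}|t-\xi|^{2-d}$); with your stated formula and the paper's definition of $\gamma_{s}$ you land on the opposite sign, so your formula and your claimed answer are incompatible as written. Moreover, your proposed arbiter (iv) cannot settle this: the $L_{2}$-clause in the definition of $W_{2,\gamma}^{2}(\Omega_{0})$ has to be read with the Laplacian computed in the punctured domain $\Omega_{0}$, where $G$ and $\partial_{\xi_{j}}G$ are harmonic, so the clause is satisfied for any values of the coefficients and gives no sign information (indeed, with $\gamma_{0}(G)=-1$ the subtraction for $h=G$ produces $2G$, so in the distributional reading it does not remove the singular mode at all). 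You therefore need to fix once and for all the convention for $\partial/\partial\xi_{j}$ and the orientation of $\nu_{t}$ and carry it through all four computations, rather than rely on (iv) as a cross-check.
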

\begin{proof} Note that
the Green function $G(x,x^{0})$ is represented as a sum
$$G(x,x^{0})=\varepsilon(x,x^{0})+k(x,x^0),$$
where $\varepsilon(x,x^{0})$ is the fundamental solution (see \eqref{fund solution}) and $k(x,x^0)$ is compensating function.
Since compensating function $k(x,x^0)$ is smooth in $\Omega,$ it follows that
$$\gamma_{j}(k(\cdot,x^0))=0, \, \gamma_{j}(\frac{\partial}{\partial
t_{s}}k(\cdot,x^0))=0, \,\ j=0,1,...,d, \, s=1,2,...,d.$$
It remains to prove that the values of functionals
\begin{align*}&\gamma_{0}(\varepsilon),\gamma_{1}(\varepsilon),...,\gamma_{d}(\varepsilon),\\
&\gamma_{0}(\frac{\partial \varepsilon}{\partial \xi_{s}}),\gamma_{1}(\frac{\partial
\varepsilon}{\partial \xi_{s}}),...,\gamma_{d}(\frac{\partial \varepsilon}{\partial
\xi_{s}})
\end{align*}
are finite. By straightforward calculation, we obtain
\begin{align*}
 &\gamma_{0}(\varepsilon)\stackrel{\eqref{functional}}{=}-C_{d}\cdot\lim_{\delta\rightarrow
 0}\int_{\partial\Pi_{\delta}^{0}}\frac{\partial}{\partial
 \nu_{t}}|t-x^{0}|^{2-d}dS_{t}\\
&=-C_{d}\cdot\lim_{\delta\rightarrow
0}\int_{\partial\Pi_{\delta}^{0}}\sum_{k=1}^{d}\frac{\partial}{\partial
t_{k}}|t-x^{0}|^{2-d}\cdot\frac{(t_{k}-x^{0}_{k})}{|t-x^{0}|}dS_{t}\\
&=-C_{d}\sum_{k=1}^{d}\lim_{\delta\rightarrow
0}\int_{\partial\Pi_{\delta}^{0}}\frac{\partial}{\partial
t_{k}}(|t-x^{0}|^{2})^\frac{2-d}{2}\cdot\frac{(t_{k}-x^{0}_{k})}{|t-x^{0}|}dS_{t}\\
&=-C_{d}\sum_{k=1}^{d}\lim_{\delta\rightarrow
0}\int_{\partial\Pi_{\delta}^{0}}(2-d)|t-x^{0}|^{-d-1}(t_{k}-x^{0}_{k})^{2}dS_{t}\\
&=-C_{d}\cdot(2-d)\sum_{k=1}^{d}\lim_{\delta\rightarrow
0}\int_{\partial\Pi_{\delta}^{0}}|t-x^{0}|^{-d-1}(t_{k}-x^{0}_{k})^{2}dS_{t},
\end{align*}
where $C_{d}=-\frac{1}{(d-2)\sigma_{d}}.$
By using the substitution $t=x^{0}+\delta\cdot\frac{t-x^{0}}{\delta}=x^{0}+\delta\eta,$ here
$\eta\in S^{d-1}$ and $dS_{t}=\delta^{d-1}dS_{\eta},$ we obtain the required formula, i.e.
\begin{align*}
&\gamma_{0}(\varepsilon)=-C_{d}\cdot(2-d)\sum_{k=1}^{d}\lim_{\delta\rightarrow
0}\int_{S^{d-1}}\delta^{-d+1}\eta^{2}_{k}\delta^{d-1}dS_{\eta}\\
&=-C_{d}\cdot(2-d)\cdot\lim_{\delta\rightarrow
0}\int_{S^{d-1}}\sum_{k=1}^{d}\eta^{2}_{k}dS_{\eta}=-\frac{1}{(2-d)\sigma_{d}}(2-d)\sigma_{d}=-1.
\end{align*}
Similarly, again by the substitution $t=x^{0}+\delta\cdot\frac{t-x^{0}}{\delta}=x^{0}+\delta\eta,$
we compute $\gamma_{s}(\varepsilon)$
\begin{align*}
 &\gamma_{s}(\varepsilon)\stackrel{\eqref{functional}}{=}d\cdot C_{d}\lim_{\delta\rightarrow
 0}\int_{\partial\Pi_{\delta}^{0}}\frac{(t_{s}-x^{0}_{s})}{|t-x^{0}|}|t-x^{0}|^{2-d}dS_{t}\\
&=d\cdot C_{d}\cdot\lim_{\delta\rightarrow
0}\int_{S^{d-1}}\eta_{s}\delta^{2-d}\delta^{d-1}dS_{\eta}=d\cdot
C_{d}\cdot\lim_{\delta\rightarrow 0}\delta\cdot\int_{S^{d-1}}\eta_{s} dS_{\eta}=0.
\end{align*}
Now, we study the value of
$\gamma_{0}(\frac{\partial G(\xi,x^{0}}{\partial \xi_{j}})$. First, we need to compute the normal derivative of the function
$\frac{\partial \varepsilon(\xi,x^{0})}{\partial \xi_{j}}, \,\ j=1,2,...,d$
\begin{align*}
 &\frac{\partial}{\partial \nu_{t}}(\frac{\partial \varepsilon(t,x^{0})}{\partial
 \xi_{j}})=C_{d}\cdot(2-d)\sum_{k=1}^{d}\frac{\partial}{\partial
 t_{k}}\big(|t-x^{0}|^{-d}\cdot(t_{j}-x^{0}_{j})\big)\cdot\frac{(t_{k}-x^{0}_{k})}{|t-x^{0}|}\\
&=C_{d}\cdot(2-d)\big(\sum_{k=1}^{d}\big((t_{j}-x^{0}_{j})\cdot(-\frac{d}{2})|t-x^{0}|^{-d-2}\cdot2(t_{k}-x^{0}_{k})\cdot\frac{(t_{k}-x^{0}_{k})}{|t-x^{0}|}\big)+|t-x^{0}|^{-d-1}(t_{j}-x^{0}_{j})\big)\\
&=C_{d}\cdot(d-2)(d-1)(t_{j}-x^{0}_{j})\cdot|t-x^{0}|^{-d-1}.
\end{align*}
Next, we compute the values of the functional
$$\gamma_{0}(\frac{\partial \varepsilon(\cdot,x^{0})}{\partial
\xi_{j}})\stackrel{\eqref{functional}}{=}-C_{d}\cdot(d-2)(d-1)\cdot\lim_{\delta\rightarrow
0}\int_{\partial\Pi_{\delta}^{0}}(t_{j}-x^{0}_{j})\cdot|t-x^{0}|^{-d-1}dS_{t}.$$
By using the substitution $\eta=\frac{t-x^{0}}{\delta}$ in the last integral, we obtain the following equality
\begin{align*}
 \gamma_{0}(\frac{\partial \varepsilon(\cdot,x^{0})}{\partial \xi_{j}})
 &=-C_{d}\cdot(d-2)(d-1)\cdot\lim_{\delta\rightarrow
 0}\int_{\partial\Pi_{\delta}^{0}}(t_{j}-x^{0}_{j})\cdot|t-x^{0}|^{-d-1}dS_{t}\\
&=-C_{d}\cdot(d-2)(d-1)\lim_{\delta\rightarrow
0}\int_{S^{d-1}}\delta\cdot\eta_{j}\cdot\delta^{-d-1}\delta^{d-1} dS_{\eta}\\
&=-C_{d}\cdot(d-2)(d-1)\lim_{\delta\rightarrow 0}\big(\delta^{-1}\int_{S^{d-1}}\eta_{j}
dS_{\eta}\big)=0.
\end{align*}
Now, we compute the values of functionals $\gamma_{s}(\frac{\partial
\varepsilon(\cdot,x^{0})}{\partial \xi_{j}}),$ $s=1,2,...,d.$
If $j=s,$ then
\begin{align*}
 &\gamma_{s}(\frac{\partial \varepsilon(\cdot,x^{0})}{\partial
 \xi_{j}})\stackrel{\eqref{functional}}{=}C_{d}\cdot(2-d)\cdot d\cdot\lim_{\delta\rightarrow
 0}\int_{\partial\Pi_{\delta}^{0}}\frac{(t_{s}-x^{0}_{s})}{|t-x^{0}|}\cdot|t-x^{0}|^{-d}\cdot(t_{j}-x^{0}_{j})dS_{t}\\
 &=\frac{d}{\sigma_{d}}\cdot\lim_{\delta\rightarrow
 0}\int_{S^{d-1}}\eta_{s}\cdot\delta^{-d}\cdot\eta_{j}\delta^{d}dS_{\eta}=\frac{d}{\sigma_{d}}\cdot\lim_{\delta\rightarrow
 0}\int_{S^{d-1}}\eta_{s}\cdot\eta_{j}dS_{\eta}.
\end{align*}
Since
$$\int_{S^{d-1}}\eta^{2}_{j}dS_{\eta}=\frac{\sigma_{d}}{d},$$
it follows that
$$\gamma_{s}(\frac{\partial \varepsilon(\cdot,x^{0})}{\partial
 \xi_{j}})=1.$$

In the case $j\neq s,$ it is easy to see that
 $\gamma_{s}(\frac{\partial \varepsilon(\cdot,x^{0})}{\partial \xi_{j}})=0.$
This concludes the proof.
\end{proof}
\begin{lemma}\label{rep of h} Every element $h$ in $W_{2,\gamma}^{2}(\Omega_{0})$
is represented as
$$h(x)=h_{0}(x)+\gamma_{0}(h)G(x,x^{0})+\sum_{i=1}^{d}\gamma_{i}(h)\frac{\partial
G(x,x^{0})}{\partial
\xi_{i}}$$ where $h_{0}\in W_{2}^{2}(\Omega)$ and $h_{0}\mid_{\partial \Omega}=0.$

Moreover, such representation is unique.
\end{lemma}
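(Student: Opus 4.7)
The plan is to verify the decomposition tautologically, by setting
\[
h_{0}(x) := h(x) - \gamma_{0}(h)\,G(x,x^{0}) - \sum_{i=1}^{d} \gamma_{i}(h)\,\frac{\partial G(x,x^{0})}{\partial \xi_{i}},
\]
so the representation is automatic; what requires argument is that $h_{0}\in W_{2}^{2}(\Omega)$, that $h_{0}\mid_{\partial\Omega}=0$, and that the decomposition is unique. The boundary condition is immediate: $h\mid_{\partial\Omega}=0$ by the definition of $W_{2,\gamma}^{2}(\Omega_{0})$, $G(\cdot,x^{0})\mid_{\partial\Omega}=0$ by Theorem~\ref{D prob}, and $\partial G(\cdot,x^{0})/\partial \xi_{i}\mid_{\partial\Omega}=0$ follows from differentiating the Dirichlet boundary identity $G(x,\xi)\mid_{x\in\partial\Omega}\equiv 0$ in the interior variable $\xi_{i}$.

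For global $W_{2}^{2}$ regularity, note $\Delta h_{0}\in L_{2}(\Omega)$ holds by the definition of $W_{2,\gamma}^{2}(\Omega_{0})$, so Theorem~\ref{D prob} supplies the candidate
\[
w(x) := \int_{\Omega} G(x,\xi)\,\Delta h_{0}(\xi)\, d\xi,
\]
and standard elliptic regularity for the Dirichlet problem on the ball yields $w\in W_{2}^{2}(\Omega)$, $w\mid_{\partial\Omega}=0$, and $\Delta w=\Delta h_{0}$. Then $u:=h_{0}-w$ is harmonic in $\Omega_{0}$, vanishes on $\partial\Omega$, and lies in $W_{2,loc}^{2}(\Omega_{0})$. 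Because $w\in W_{2}^{2}(\Omega)$ is regular at $x^{0}$, the shrinking-sphere integrals defining $\gamma_{j}(w)$ all tend to zero as $\delta\to 0+$; for example, $\int_{\partial\Pi_{\delta}^{0}} \frac{\partial}{\partial\nu_{t}}w\, dS_{t}=\int_{\Pi_{\delta}^{0}} \Delta w\, dx\to 0$ by Cauchy-Schwarz, and similarly for the weighted integrals defining $\gamma_{s}$. Hence $\gamma_{j}(u)=\gamma_{j}(h_{0})$, and linearity together with Lemma~\ref{basic lem} yields $\gamma_{j}(h_{0})=0$ for $j=0,1,\ldots,d$.

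The main obstacle is the final step: showing that an element $u\in W_{2,loc}^{2}(\Omega_{0})$ which is harmonic on $\Omega_{0}$, satisfies $u\mid_{\partial\Omega}=0$, and has $\gamma_{j}(u)=0$ for every $j=0,1,\ldots,d$ must vanish identically. The idea is that the isolated singularities at $x^{0}$ admissible for such harmonic $u$ are spanned, modulo smooth harmonic terms, by $G(\cdot,x^{0})$ and its first-order $\xi$-partials $\partial G(\cdot,x^{0})/\partial \xi_{s}$; by Lemma~\ref{basic lem} the $(d+1)$ functionals $\gamma_{0},\gamma_{1},\ldots,\gamma_{d}$ pair nondegenerately with this family, so annihilation of all of them kills the singular part of $u$. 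Thus $u$ extends to a harmonic function on $\Omega$ with zero Dirichlet data, hence $u\equiv 0$ by the maximum principle, and so $h_{0}=w\in W_{2}^{2}(\Omega)$. Uniqueness follows by the same functional-testing: for any decomposition $h=\tilde{h}_{0}+c_{0}G+\sum_{i}c_{i}\partial G/\partial \xi_{i}$ with $\tilde{h}_{0}\in W_{2}^{2}(\Omega)$ and $\tilde{h}_{0}\mid_{\partial\Omega}=0$, applying $\gamma_{0},\ldots,\gamma_{d}$ (which annihilate $\tilde{h}_{0}$ by the same sphere-integral argument used for $w$) and invoking Lemma~\ref{basic lem} recovers $c_{0},c_{1},\ldots,c_{d}$ uniquely from $h$, forcing $\tilde{h}_{0}=h_{0}$.
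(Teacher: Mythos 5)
Your overall strategy is the same as the paper's: set $h_{0}:=h-\gamma_{0}(h)G-\sum_{i}\gamma_{i}(h)\partial G/\partial\xi_{i}$, observe $\Delta h_{0}\in L_{2}(\Omega)$, and compare $h_{0}$ with the $W_{2}^{2}(\Omega)$ solution of the Dirichlet problem; your check of the boundary values of $\partial G/\partial\xi_{i}$ and your uniqueness argument are useful additions that the paper leaves implicit. But the step you yourself call the main obstacle is where the argument genuinely fails. It is not true that a function $u\in W_{2,loc}^{2}(\Omega_{0})$ which is harmonic on $\Omega_{0}$, vanishes on $\partial\Omega$, and satisfies $\gamma_{j}(u)=0$ for $j=0,1,\dots,d$ must have a removable singularity: the functionals \eqref{functional} test the behaviour at $x^{0}$ only against spherical harmonics of degree $0$ and $1$, so every higher-order multipole is invisible to them. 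Concretely, $u(x)=\frac{\partial^{2}G(x,\xi)}{\partial\xi_{1}\partial\xi_{2}}\mid_{\xi=x^{0}}$ is harmonic in $\Omega_{0}$, lies in $W_{2,loc}^{2}(\Omega_{0})$, vanishes on $\partial\Omega$ (differentiate $G(x,\xi)\mid_{x\in\partial\Omega}=0$ twice in $\xi$), and has $\gamma_{j}(u)=0$ for all $j$, because on each sphere $\partial\Pi_{\delta}^{0}$ its singular part is $c\,\eta_{1}\eta_{2}\,\delta^{-d}$, a degree-two spherical harmonic, so the integrals in \eqref{functional} vanish identically; yet $u\sim|x-x^{0}|^{-d}$ is not even in $L_{2}$ near $x^{0}$. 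Hence the admissible singularities are not spanned, modulo regular harmonic terms, by $G(\cdot,x^{0})$ and its first-order $\xi$-derivatives, and annihilation by the $d+1$ functionals does not kill the singular part.

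What actually closes the argument --- and what the paper's own terse proof implicitly relies on when it identifies $w$ with the $W_{2}^{2}(\Omega)$ solution of the Dirichlet problem ``by uniqueness'' --- is the defining condition of $W_{2,\gamma}^{2}(\Omega_{0})$ read across the puncture: $\Delta h_{0}$ belongs to $L_{2}(\Omega)$ as a distribution on all of $\Omega$, not merely as a pointwise Laplacian on $\Omega_{0}$. With that reading, $u=h_{0}-w$ has distributional Laplacian zero on the whole ball, hence is harmonic on $\Omega$ (Weyl's lemma); higher multipoles are excluded automatically because their Laplacians are derivatives of $\delta_{x^{0}}$, not $L_{2}$ functions, and then $u\equiv 0$ by the maximum principle, with no classification of singularities and no use of $\gamma_{j}(u)=0$ at all. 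Your proof never invokes this piece of information, and that is precisely the missing ingredient. Two smaller remarks: with the normalization $\gamma_{0}(G)=-1$ stated in Lemma \ref{basic lem}, your computation $\gamma_{0}(h_{0})=0$ actually yields $\gamma_{0}(h_{0})=2\gamma_{0}(h)$ --- this is an internal sign inconsistency between Lemma \ref{basic lem} and the sign conventions in Lemma \ref{rep of h} rather than your error, but it should be reconciled; and your uniqueness argument is sound, since it only needs invertibility of the matrix $(\gamma_{j}(\varphi_{i}))_{i,j}$ together with the vanishing of the functionals on $W_{2}^{2}(\Omega)$ functions.
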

\begin{proof} Since $h\in
W_{2,\gamma}^{2}(\Omega_{0}),$ it follows that there exist values of $\gamma_{0}(h),\gamma_{i}(h),i=1,2,...,d,$ which are finite. Set
$$w(x)\equiv
h(x)-\gamma_{0}(h)G(x,x^{0})-\sum_{i=1}^{d}\gamma_{i}(h)\frac{\partial
G(x,x^{0})}{\partial \xi_{i}}.$$ From the definition of
$W_{2,\gamma}^{2}(\Omega_{0})$ it follows that $w\mid_{\partial
\Omega}=0$ and $\Delta w\in L_{2}(\Omega).$

Let us denote $f(x)=\Delta w,\,x\in \Omega.$ Then, the solution of the Dirichlet problem
\begin{align*} &\Delta u=f(x),\,x\in \Omega,\\
&u\mid_{\partial \Omega}=0
\end{align*} exists and unique in the class
$W_{2}^{2}(\Omega).$ Consequently, $u(x)\in W_{2}^{2}(\Omega).$
The uniqueness of the solution of the Dirichlet problem implies that $u(x)\equiv
w(x).$ Therefore, $w(x)$ also belongs to $W_{2}^{2}(\Omega).$
This completes the proof.
\end{proof}

We correspond the maximal operator $B_{M}$ defined by the expression
$$B_{M}u=\Delta
\big(u-\gamma_{0}(u)G(x,x^{0})-\sum_{s=1}^{d}\gamma_{s}(u)\frac{\partial}{\partial
\xi_{s}}G(x,x^{0})\big)$$
\begin{align*}D(B_{M})=\big\{u\in W_{2,loc}^{2}(\Omega_{0}):
u\mid_{\partial\Omega}=0,u-\gamma_{0}(u)G(x,x^{0})-\sum_{s=1}^{d}\gamma_{s}(u)\frac{\partial}{\partial
\xi_{s}}G(x,x^{0})\in W_{2}^{2}(\Omega)\big\}.
\end{align*}
Therefore, domain of the maximal operator $B_{M}$ coincides with $W_{2,\gamma}^{2}(\Omega_{0}).$
\begin{remark} The maximal operator $B_{M}$ defined above is a closed operator in the following sense.
For all $n\geq 1,$ let us consider the following sequences
$$w_{n}(x)=w_{0 n}(x)+\sum_{i=0}^{d}\gamma_{i}(w_{n})\varphi_{i}(x)\in W_{2, \gamma}^{2}(\Omega_{0}),$$ where $w_{0 n}\in W_{2}^{2}(\Omega)$ and $w_{0 n}\mid_{\partial\Omega}=0,$ and
$$f_{n}(x)=-B_{M}w_{n}(x).$$
Suppose that $w_{0 n}(x)$ converges to $v_{0}(x)$ and $f_{n}(x)$ converges to $g(x)$ in $L_{2}(\Omega).$
Also, assume that there exist limits
$$\lim_{n\rightarrow \infty}\gamma_{i}(w_{n})=c_{i}, \,\ i=0,1,...,d.$$
Then $v_{0}\in W_{2}^{2}(\Omega),$ $\Delta v_{0}(x)=g(x),$ and
$$\lim_{n\rightarrow \infty}w_{n}(x)=v_{0}(x)+\sum_{i=0}^{d}c_{i}\varphi _{i}(x).$$
Moreover, it is easy to see that
$$B_{M}\left(v_{0}(x)+\sum_{i=0}^{d}c_{i}\varphi_{i}(x)\right)=-\Delta v_{0}(x).$$
This shows that the operator $B_{M}$ is closed in the above specified sense.
\end{remark}

The following theorem shows the correct restriction of the operator $B_{M}.$
\begin{theorem}\label{B_{M} oper} Boundary value problem for the Poisson equation in the punctured area $\Omega_{0}$
\begin{equation}\label{2.5}B_{M}u(x)=f(x), \,\,\ x\in\Omega_{0}
\end{equation}
with external Dirichlet condition
\begin{equation}\label{2.6}u\mid_{\partial\Omega}=0
\end{equation}
and with internal boundary condition
\begin{equation}\label{2.7}\gamma_{i}(u)=\gamma_{i}(h), \,\,\ i=0,1,2,...,d
\end{equation}
has a unique solution $u$ in the class $W_{2,\gamma}^{2}(\Omega_{0})$ for any $f\in L_{2}(\Omega)$ and $h\in
W_{2,\gamma}^{2}(\Omega_{0}).$
Moreover, it is represented by the formula
$$u(x)=\int_{\Omega}G(x,\xi)f(\xi)d\xi+\gamma_{0}(h)G(x,x^{0})+\sum_{i=1}^{d}\gamma_{i}(h)\frac{\partial
G(x,x^{0})}{\partial\xi_{i}}.$$
\end{theorem}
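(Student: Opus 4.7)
The plan is to verify directly that the displayed formula solves the problem (existence) and then to use Lemma \ref{rep of h} to reduce uniqueness to the classical Dirichlet problem handled by Theorem \ref{D prob}. For existence, I first introduce the regular part $v(x):=\int_{\Omega} G(x,\xi) f(\xi)\, d\xi$. By Theorem \ref{D prob}, $v\in W_2^2(\Omega)$ with $v|_{\partial\Omega}=0$ and $\Delta v = f$. Because $v$ has no singularity at $x^0$, the spherical integrals that define the functionals $\gamma_i$ all shrink as $\delta\to 0^+$, so $\gamma_i(v)=0$ for every $i=0,1,\ldots,d$. Then I set
$$u := v + \gamma_0(h)\, G(\cdot,x^0) + \sum_{i=1}^{d}\gamma_i(h)\, \frac{\partial G(\cdot,x^0)}{\partial \xi_i},$$
whose membership in $W_{2,\gamma}^{2}(\Omega_0)$ is immediate from Lemma \ref{basic lem}. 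The external boundary condition $u|_{\partial\Omega}=0$ comes from $G(x,\xi)|_{x\in\partial\Omega}=0$, which after differentiation in $\xi$ also annihilates the derivative terms; the inner boundary identities $\gamma_i(u)=\gamma_i(h)$ follow by linearity of the $\gamma_i$ and the explicit values computed in Lemma \ref{basic lem}. Once these values are matched, the singular subtraction built into $B_M$ strips the last $d+1$ terms and leaves $B_M u = \Delta v = f$.

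For uniqueness, I take $w$ to be the difference of two solutions of \eqref{2.5}-\eqref{2.7}. Then $B_M w = 0$, $w|_{\partial\Omega}=0$, and $\gamma_i(w)=0$ for all $i$. By Lemma \ref{rep of h}, the representation of $w$ collapses to its regular part $w_0\in W_2^2(\Omega)$, which satisfies $w_0|_{\partial\Omega}=0$; the definition of $B_M$ then yields $\Delta w_0 = B_M w = 0$. The classical Dirichlet problem has only the trivial $W_2^2$-solution under zero data, so $w_0 = 0$ and hence $w = 0$.

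The only delicate point I anticipate is the claim $\gamma_i(v)=0$ for the regular part $v$. For $\gamma_0$, the divergence theorem gives $\int_{\partial\Pi_\delta^0}\partial_\nu v\, dS_t = \int_{\Pi_\delta^0}\Delta v\, dx$, whose modulus is bounded by $\|\Delta v\|_{L_2}|\Pi_\delta^0|^{1/2}\to 0$. For $\gamma_s$ one needs the spherical mean of $v$ itself on $\partial\Pi_\delta^0$ to decay; since $W_2^2(\Omega)$ does not embed into $C(\overline{\Omega})$ in high dimension, I would handle this by density of $C^\infty(\overline{\Omega})$ in $W_2^2(\Omega)$ together with a uniform trace estimate forcing the surface integral to vanish as $\delta \to 0^+$.
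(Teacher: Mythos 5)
Your argument is correct, and for existence it is essentially the paper's own argument: substitute the displayed formula into \eqref{2.5}--\eqref{2.7} and verify it using Lemma \ref{basic lem}. Where you genuinely differ is in the two places the paper is terse. For uniqueness the paper appeals to the removable-singularity theorem for harmonic functions (the difference of two solutions is harmonic in $\Omega_{0}$, the vanishing of the $\gamma_{i}$ removes the singularity at $x^{0}$, and classical Dirichlet uniqueness finishes), whereas you route the same reduction through Lemma \ref{rep of h}: all $\gamma_{i}(w)=0$ forces $w=w_{0}\in W_{2}^{2}(\Omega)$ with $w_{0}\mid_{\partial\Omega}=0$ and $\Delta w_{0}=B_{M}w=0$, hence $w=0$. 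Both roads end at the classical Dirichlet problem; yours stays inside the paper's stated lemmas, though note that the proof of Lemma \ref{rep of h} itself tacitly uses the same removable-singularity-type identification of $w$ with the $W_{2}^{2}$ solution, so the two routes are logically equivalent rather than independent. You are also more careful than the paper about $\gamma_{i}(v)=0$ for the regular part $v=\int_{\Omega}G(\cdot,\xi)f(\xi)d\xi$: the divergence-theorem bound handles $\gamma_{0}$, and for $\gamma_{s}$ your density-plus-scaled-trace plan does work, e.g. $\int_{\partial\Pi_{\delta}^{0}}|v|\,dS\le C\bigl(\delta^{(d-2)/2}\|v\|_{L_{2}(\Pi_{\delta}^{0})}+\delta^{d/2}\|\nabla v\|_{L_{2}(\Pi_{\delta}^{0})}\bigr)\to 0$, which is needed precisely because $W_{2}^{2}(\Omega)\not\subset C(\overline{\Omega})$ for $d\geq 4$; the paper dismisses this as "easy to verify." One caveat: if you take the values in Lemma \ref{basic lem} literally, namely $\gamma_{0}(G)=-1$, then your check of \eqref{2.7} at $i=0$ yields $\gamma_{0}(u)=-\gamma_{0}(h)$. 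This is a sign inconsistency internal to the paper (with the minus sign in \eqref{functional} one needs $\gamma_{0}(G)=+1$ both for $G\in W_{2,\gamma}^{2}(\Omega_{0})$ and for the theorem's formula), so your verification is right modulo the sign convention the paper itself implicitly uses, but you should state which convention you adopt.
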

\begin{proof} To prove the Theorem \ref{B_{M} oper} we need to check the equation
\eqref{2.5}, external boundary condition \eqref{2.6}, and internal boundary condition \eqref{2.7}, which is easy to verify by Lemma \ref{basic lem}.
If $f=0$ and $h=0,$ then it follows from the theorem on removable singularity of harmonic function \cite[Theorem III. 39, p. 112]{O} that the problem \eqref{2.5}-\eqref{2.7} has a unique solution in the punctured area $\Omega_{0}.$
\end{proof}

\begin{remark} For the solvability of the problem \eqref{2.5}-\eqref{2.6}, it is necessary to add $d+1$ boundary conditions additionally. This fact was the motivation for the operator $B_{M}$ to be called maximal.
\end{remark}

Define a bilinear form $<f,w>$ of elements $f\in L_{2}(\Omega)$ and $w\in W_{2,\gamma}^{2}(\Omega_{0}).$ For this, first we need to find a function $v_{0}\in W_{2}^{2}(\Omega)$ as the solution of Dirichlet problem
\begin{align*} &\Delta v_{0}(x)=f(x),\,x\in \Omega,\\
&v_{0}\mid_{\partial \Omega}=0.
\end{align*}
Then, the bilinear form  $<f,w>$ is computed by
\begin{eqnarray*}\begin{split}<f,w>\equiv
&<-\Delta v_{0},w_{0}+\sum_{i=0}^{d}\gamma_{i}(w)\varphi_{i}(x)>:=<-\Delta v_{0}, w_{0}>_{L_2(\Omega)}+\sum_{i=0}^{d}\overline{\gamma_{i}(w)}\beta_{i}(v).
\end{split}\end{eqnarray*}
Here, we used the fact that by Lemma \ref{rep of h}, every $w\in W_{2,\gamma}^{2}(\Omega_{0})$ have a form
$$w(x)=w_{0}(x)+\sum_{i=0}^{d}\gamma_{i}(w)\varphi_{i}(x),$$
where $\varphi_{0}(x)=G(x,x^0),$ $\varphi_{i}(x)=\frac{\partial G(x,t)}{\partial t_{i}}\mid_{t=x^0},$ $i=1,...,d,$ and $w_{0}\in W_{2}^{2}(\Omega).$
Also, note that the numbers $\beta_{0}(v),\beta_{1}(v),...,\beta_{d}(v)$ are defined by formulas
$$\beta_{0}(v)=v_{0}(x^0), \,\ \beta_{i}(v)=\frac{\partial v_{0}(x)}{\partial x_{i}}\mid_{x=x^0}, \,\ i=1,...,d. $$
We further let
$$<w,f>=\overline{<f,w>}.$$
According to the Theorem \ref{B_{M} oper}, we compute the following boundary form
$$<B_{M}w,v>-<w,B_{M}v>.$$

\begin{theorem}\label{Boundary formula} For any $w,v\in W_{2,\gamma}^{2}(\Omega_{0}),$ we have
$$<B_{M}w,v>-<w,B_{M}v>=\sum_{i=0}^{d}\gamma_{i}(w)\overline{\beta_{i}(v)}-\sum_{i=0}^{d}\beta_{i}(w)\overline{\gamma_{i}(v)},$$
where $\beta_{0}(v)=v_{0}(x^0),$ $\beta_{i}(v)=\frac{\partial v_{0}}{\partial x_{i}}\mid_{x=x^0},$ $\beta_{0}(w)=w_{0}(x^0),$ $\beta_{i}(w)=\frac{\partial w_{0}}{\partial x_{i}}\mid_{x=x^0}.$
\end{theorem}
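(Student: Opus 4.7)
The plan is to exploit the canonical decomposition furnished by Lemma \ref{rep of h}: every element of $W_{2,\gamma}^{2}(\Omega_{0})$ splits uniquely as a regular part plus the $d+1$ singular contributions coming from the $\varphi_i$. Accordingly, I would first write
$$w = w_{0}+\sum_{i=0}^{d}\gamma_{i}(w)\varphi_{i},\qquad v = v_{0}+\sum_{i=0}^{d}\gamma_{i}(v)\varphi_{i},$$
with $w_{0},v_{0}\in W_{2}^{2}(\Omega)$ and $w_{0}|_{\partial\Omega}=v_{0}|_{\partial\Omega}=0$. By the very definition of $B_{M}$ the singular pieces are stripped off before $\Delta$ is applied, so $B_{M}w=\Delta w_{0}$ and $B_{M}v=\Delta v_{0}$, and both lie in $L_{2}(\Omega)$. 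Moreover, by uniqueness for the Dirichlet problem these two functions are exactly the auxiliary smooth solutions one is instructed to produce in the construction of $<B_{M}w,v>$ and $<B_{M}v,w>$.

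Inserting these data into the definition of the bilinear form preceding the theorem, the pairing $<B_{M}w,v>$ splits into an $L_{2}(\Omega)$ inner product of $\Delta w_{0}$ with $v_{0}$ plus a finite combination of the terms $\overline{\gamma_{i}(v)}\,\beta_{i}(w)$; the point values $\beta_{i}(w)$ appear because the auxiliary Dirichlet problem with right-hand side $B_{M}w=\Delta w_{0}$ and zero boundary data returns $w_{0}$, so its value and first derivatives at $x^{0}$ coincide with $\beta_{i}(w)$. The same manipulation applied to $<B_{M}v,w>$ produces the symmetric expression, and complex conjugation then yields $<w,B_{M}v>$.

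When $<w,B_{M}v>$ is subtracted from $<B_{M}w,v>$, two groups of terms appear. The first is a purely $L_{2}(\Omega)$ contribution of the shape $<\Delta w_{0},v_{0}>-<w_{0},\Delta v_{0}>$, which is annihilated by Green's second identity because both $w_{0}$ and $v_{0}$ lie in $W_{2}^{2}(\Omega)$ and vanish on $\partial\Omega$. The second group is precisely the algebraic combination $\sum\gamma_{i}(w)\overline{\beta_{i}(v)}-\sum\beta_{i}(w)\overline{\gamma_{i}(v)}$ asserted in the statement. The main point of caution, and really the only obstacle, is that the $\varphi_{i}$ are themselves singular at $x^{0}$, which would forbid a direct application of Green's identity to $w,v$ over all of $\Omega$; the decomposition of Lemma \ref{rep of h} is what neutralises this, packing all singular behaviour into the $\gamma_{i}$-coefficients and leaving only regular residuals, to which Green's identity applies without any limiting argument around $\Pi_{\delta}^{0}$.
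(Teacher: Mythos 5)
Your argument is correct and follows essentially the same route as the paper's own proof: decompose $w$ and $v$ by Lemma \ref{rep of h}, insert into the definition of the bilinear form (where, by uniqueness of the Dirichlet problem, the auxiliary solutions are exactly the regular parts $w_{0},v_{0}$, so the point values are the $\beta_{i}$'s), cancel $<\Delta w_{0},v_{0}>-<w_{0},\Delta v_{0}>$ by Green's second identity, and read off the remaining $\gamma$--$\beta$ combination. Your phrasing, which obtains the $\beta_{i}$-terms directly from the definition of the form rather than by formally pairing $w_{0},v_{0}$ with $\Delta\varphi_{i}$ as the paper does, is the same computation in substance (any residual sign discrepancy stems from the paper's own inconsistent $\pm\Delta$ conventions, not from your argument).
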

\begin{proof} By Lemma \ref{rep of h}, we know that every element $w$ in $W_{2,\gamma}^{2}(\Omega_{0})$
is represented uniquely as $$w(x)=w_{0}(x)+\gamma_{0}(w)G(x,x^{0})+\sum_{i=1}^{d}\gamma_{i}(w)\frac{\partial
G(x,x^{0})}{\partial \xi_{i}}$$ where $w_{0}\in W_{2}^{2}(\Omega)$ and $w_{0}\mid_{\partial \Omega}=0.$ Set
$\varphi_{0}(x)=G(x,x^0),$ $\varphi_{i}(x)=\frac{\partial G(x,x^0)}{\partial \xi_{i}},$ $i=1,2,...,d.$
Take two arbitrary elements $w,v\in W_{2,\gamma}^{2}(\Omega_{0}).$ We denote by $J(w,v)$ the boundary form $<B_{M}w,v>-<w,B_{M}v>.$
Then,
\begin{eqnarray*}\begin{split}J(w,v)
&=<-\Delta w_{0},v_{0}+\sum_{i=0}^{d}\gamma_{i}(v)\varphi_{i}>-<w_{0}+\sum_{i=0}^{d}\gamma_{i}(w)\varphi_{i},-\Delta v_{0}>\\
&=<-\Delta w_{0}, v_{0}>-<w_{0}, -\Delta v_{0}>-\sum_{i=0}^{d}\overline{\gamma_{i}(v)}<w_{0},\Delta \varphi_{i}>+\sum_{i=0}^{d}\gamma_{i}(w)<\varphi_{i},\Delta v_{i}>\\
&=\sum_{i=0}^{d}\gamma_{i}(w)\overline{\beta_{i}(v)}-\sum_{i=0}^{d}\beta_{i}(w)\overline{\gamma_{i}(v)}.
\end{split}\end{eqnarray*}
This completes the proof.
\end{proof}
It follows from the Theorem \ref{Boundary formula} that numerical vectors
$$\Gamma_{1}(w)=[\gamma_{0}(w),\gamma_{1}(w),...,\gamma_{d}(w)]$$
and
$$\Gamma_{2}(w)=[\beta_{0}(w),\beta_{1}(w),...,\beta_{d}(w)]$$ represent boundary trace of an element $w\in W_{2,\gamma}^{2}(\Omega_{0}).$ Since $w$ is arbitrary element from $W_{2,\gamma}^{2}(\Omega_{0}),$ the operators $\Gamma_{1}$ and $\Gamma_{2}$ are surjective from $W_{2,\gamma}^{2}(\Omega_{0})$ into $\mathbb{C}^{d+1}.$ Consequently, $<\mathbb{C}^{d+1},\Gamma_{1}, \Gamma_{2}>$ represents Boundary Triples (see \cite{AK, Ben, Kur} for more details). In \cite{Ben}, there were attended only boundary values $\gamma_{0}(w)$ and $\beta_{0}(w).$ In our case, there is more complete selection of boundary values $\Gamma_{1}(w)$ and $\Gamma_{2}(w).$

\section{Correctly solvable pointwise perturbations}

Define an operator $K$ which maps the elements from the space
$L_{2}(\Omega)$ into $W_{2,\gamma}^{2}(\Omega_{0}),$ which is continuous in the following sense:
\begin{enumerate}[{\rm (i)}]\label{A}
  \item If
the sequence of norms $\|h_{n}\|_{L_{2}(\Omega)}\rightarrow 0$
as $n\rightarrow \infty,$ then for any $j=0,1,2,...,d,$ we have
$\gamma_{j}(Kh_{n})\rightarrow 0$ as
$n\rightarrow \infty.$
\end{enumerate}
First, we prove the following important theorem.
\begin{theorem}\label{B_{M} oper2} For any operator
$$K:L_{2}(\Omega)\rightarrow W_{2,\gamma}^{2}(\Omega_{0}),$$
which is continuous in the sense (i), the following problem of pointwise perturbation
\begin{align*} &B_Mu(x)=f(x), \,\,\ x\in\Omega_{0}\\
&u\mid_{\partial\Omega}=0, \\
&\gamma_{j}(u)=\gamma_{j}(KB_Mu), \,\,\ j=0,1,2,...,d
\end{align*}
has a unique solution in $W_{2,\gamma}^{2}(\Omega_{0})$ for any $f\in L_2(\Omega),$ which is represented as
$$u(x)=\int_{\Omega}G(x,\xi)f(\xi)d\xi+\gamma_{0}(Kf)G(x,x^{0})+\sum_{i=1}^{d}\gamma_{i}(Kf)\frac{\partial
G(x,x^{0})}{\partial\xi_{i}}.$$
\end{theorem}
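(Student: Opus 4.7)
The central observation is that whenever $u$ satisfies $B_{M}u=f$, the quantity $KB_{M}u$ depends only on the data: $KB_{M}u=Kf$. Consequently the self-referential-looking internal condition $\gamma_{j}(u)=\gamma_{j}(KB_{M}u)$ collapses to the much simpler condition $\gamma_{j}(u)=\gamma_{j}(Kf)$, and the problem reduces exactly to the one already solved in Theorem \ref{B_{M} oper} with auxiliary element $h:=Kf$.

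For existence, I would set $h:=Kf\in W_{2,\gamma}^{2}(\Omega_{0})$, so that the numbers $\gamma_{0}(Kf),\dots,\gamma_{d}(Kf)$ are finite by the very definition of the target space of $K$ (this is where continuity hypothesis (i) enters, ensuring these quantities are well-defined and depend tamely on $f$). I would then define $u$ by the formula stated in the theorem and invoke Theorem \ref{B_{M} oper}: that theorem provides $u\in W_{2,\gamma}^{2}(\Omega_{0})$ with $B_{M}u=f$, $u\mid_{\partial\Omega}=0$ and $\gamma_{j}(u)=\gamma_{j}(Kf)$ for every $j=0,1,\dots,d$. Substituting $B_{M}u=f$ back into the argument of $K$ then gives $\gamma_{j}(KB_{M}u)=\gamma_{j}(Kf)=\gamma_{j}(u)$, which is exactly the required internal boundary condition.

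For uniqueness, I would take two solutions $u_{1},u_{2}$ and form $w:=u_{1}-u_{2}$. Using linearity of $K$ (so $K(0)=0$), $w$ satisfies $B_{M}w=0$, $w\mid_{\partial\Omega}=0$ and $\gamma_{j}(w)=\gamma_{j}(KB_{M}w)=\gamma_{j}(K(0))=0$ for each $j$. The uniqueness statement inside Theorem \ref{B_{M} oper}, applied with $f=0$ and $h=0$, then forces $w\equiv 0$ in $W_{2,\gamma}^{2}(\Omega_{0})$.

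I do not expect a serious obstacle: the entire argument is a short bootstrap on top of Theorem \ref{B_{M} oper}. The only point that deserves explicit mention is the legitimacy of evaluating the functionals $\gamma_{j}$ on $Kf$ and on $KB_{M}w$; this is exactly what is guaranteed by the hypothesis that $K$ takes values in $W_{2,\gamma}^{2}(\Omega_{0})$ together with condition (i). Once that is acknowledged, the formula in the statement is not something to derive but simply the conclusion of Theorem \ref{B_{M} oper} applied with $h=Kf$.
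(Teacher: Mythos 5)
Your proposal is correct and follows essentially the same route as the paper: the paper's (very brief) proof likewise sets $h:=Kf$ and invokes Theorem \ref{B_{M} oper}, observing that the internal conditions $\gamma_{j}(u)=\gamma_{j}(KB_{M}u)$ reduce to $\gamma_{j}(u)=\gamma_{j}(Kf)$ once $B_{M}u=f$ is imposed. Your write-up merely spells out the existence and uniqueness steps (including the difference argument with $f=0$, $h=0$) that the paper leaves implicit.
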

\begin{proof} Let $f\in L_2(\Omega).$ Suppose $h(x)=Kf(x).$ Then, applying the Theorem \ref{B_{M} oper} where we replace boundary conditions
$$\gamma_j(u)=\gamma_j(h), \,\ j=0,1,…,d$$ with conditions
$$\gamma_j (u-KB_Mu)= 0, \,\, j=0,1,…,d,$$ we can obtain new correctly solvable perturbations of the Dirichlet problem \eqref{dirichlet}-\eqref{dirichlet cond}.
Moreover, it holds for any $K$ in the Theorem \ref{B_{M} oper2}.
\end{proof}
{\bf Discussion of the Theorem \ref{B_{M} oper2}.} In the Theorem \ref{B_{M} oper2}, an operator, which is given by the expression
$\Delta-C_{0}(u)\delta(x-x^{0})-\sum_{s=1}^{d}C_{s}(u)\frac{\partial}{\partial
x_{s}}\delta(x,x^{0}),$ where functionals $C_{j}(u):=\gamma_{j}(KB_{M}u),$ $j=0,1,2,...,d,$ is well-defined. If
$C_{0}(u)=ku(x^{0}),$
$C_{1}=\cdots=C_{d}\equiv 0),$ then we obtain a correct definition of the operator $\Delta+k \delta(x-x^{0}).$ This fact was proved by A.M. Savchuk and A.A. Shkalikov in \cite{Shkal}, in case $d=1.$

Let us denote by $B_{K}$ an operator corresponding to the boundary value problem in the Theorem \ref{B_{M} oper2}. Then,
operator $B_{0}=\Delta$ corresponds to the Dirichlet problem \eqref{dirichlet}-\eqref{dirichlet cond}.
Note that if $\gamma_{s}(u)=0, \, s=0,1,2,...,d$ for $u\in
W_{2,\gamma}^{2}(\Omega_{0}),$ then $B_{M}u=B_{0}u.$ Similarly,
if $\gamma_{j}(u)=\gamma_{j}(KB_{M}u), \, j=0,1,2,...,d ,$ then
$B_Mu=B_{K}u.$

\begin{remark} It is possible to obtain inverse statement of the Theorem \ref{B_{M} oper2}, but here we do not pursue such a goal.
\end{remark}
\begin{remark}
It is easy to verify that functionals $\gamma_{j}(Kf),$ $j=0,1,2,...,d,$ are linear continuous on $L_2(\Omega).$ Therefore, by the Riesz representation theorem, we have
$$\gamma_{j}(Kf)=\int_{\Omega}f(t)\overline{c_{j}(t)}dt,$$
where $c_{j}\in L_2(\Omega),$ $j=0,1,2,...,d.$
Thus, the domain of $B_K$ is defined as
\begin{align*}
D(B_K)=\{u\in D(B_M): u\mid_{\partial \Omega}=0, \, \gamma_{j}(u)=\int_{\Omega}B_Mu\cdot
\overline{c_j(t)}dt, \, j=0,1,...,d\}.
 \end{align*}
where $\{c_{0}(\cdot),...,c_{d}(\cdot)\}$ is a system of functions from $L_2(\Omega).$
The operator, which is continuous in the sense $(i),$
$K:L_{2}(\Omega)\rightarrow W_{2,\gamma}^{2}(\Omega_{0})$ is represented as
$$Kf(x)=h_0(x)+\gamma_0(Kf)G(x,x^0)+\sum_{j=1}^{d}\gamma_j(Kf)\frac{\partial
G(x,x^0)}{\partial\xi_j}, \, h_0\in W_{2}^2(\Omega).$$
Since $\gamma_s(h_0)=0$ for $s=0,1,...,d,$ and by the Theorem \ref{B_{M} oper2} the values of functionals $\gamma_{s}(Kf)$ are involved in the boundary conditions, therefore, it is sufficient to consider operators $K$
as
\begin{equation}\label{2.8}Kf(x)=C_0(f)G(x,x^0)+\sum_{j=1}^{d}C_j(f)\frac{\partial
G(x,x^0)}{\partial \xi_{j}},
\end{equation}
where $C_j(\cdot),$ $j=0,1,2,...,d,$ are linear continuous functionals on $L_2(\Omega)$.
\end{remark}
We denote by $\mathcal{K}_{d+1}$ a set of finite rank operators as in \eqref{2.8} for any systems $$\{C_0(\cdot), C_1(\cdot),...,C_d(\cdot)\}$$
where $C_s(\cdot)$ are linear continuous functionals on $L_2(\Omega).$ It follows from the Theorem \ref{B_{M} oper2} that
 for different operators $K\in\mathcal{K}_{d+1}$ we correspond different operators $B_K.$
 Hence, a family $\{B_K\}$ of operators $B_K$ can be parameterized by a parameter of an operator $K\in\mathcal{K}_{d+1}.$
If  $c_j(\cdot)\in W_2^2(\Omega)$ and $c_j(x^0)=0, \,
 \frac{\partial c_j(x^0)}{\partial x_s}=0,$ $s=1,...,d,$ then the domain of the operator $B_K$ takes the form
\begin{eqnarray*}\begin{split}
&D(B_K)=\{u\in D(B_M): u\mid_{\partial \Omega}=0,\\
&\gamma_{j}(u)=\int_{\Omega}u(t)\cdot \overline{\Delta
c_j(t)}dt-\int_{\partial\Omega}\frac{\partial u}{\partial\nu_{t}}\cdot\overline{c_j(t)}dt\\
&+\gamma_{0}(u)\int_{\partial\Omega}\frac{\partial G}{\partial\nu_{t}}\cdot\overline{c_j(t)}dt+\sum_{s=1}^{d}\gamma_{s}(u)\int_{\partial\Omega}\frac{\partial^{2} G}{\partial\nu_{t}\partial\xi_{s}}\cdot\overline{c_j(t)}dt,
\, j=0,1,...,d\}.
\end{split}\end{eqnarray*}
Moreover, if $\Delta c_j(t)=0$ in $\Omega,$ then
\begin{eqnarray*}\begin{split}
&D(B_K)=\{u\in D(B_M): u\mid_{\partial \Omega}=0, \\
&\gamma_{j}(u)=-\int_{\partial\Omega}\frac{\partial
u}{\partial\nu_{t}}\cdot\overline{c_j(t)}dt\\
&+\gamma_{0}(u)\int_{\partial\Omega}\frac{\partial G}{\partial\nu_{t}}\cdot\overline{c_j(t)}dt+\sum_{s=1}^{d}\gamma_{s}(u)\int_{\partial\Omega}\frac{\partial^{2} G}{\partial\nu_{t}\partial\xi_{s}}\cdot\overline{c_j(t)}dt, \, j=0,1,...,d\}.
\end{split}\end{eqnarray*}
 If we suppose $c_0(t)=k\cdot G(x,x^0), \, c_j(t)\equiv 0, \, j=1,...,d,$ then the domain of the operator
 $B_K$ has the form
 \begin{eqnarray*}&D(B_K)=\{u\in D(B_M): u\mid_{\partial \Omega}=0,\\
 &\gamma_0(u)=k\cdot\lim_{x\rightarrow x_{0}}(u-\gamma_{0}(u)G(x,x^{0})), \,\ \gamma_{j}(u)=0, \, j=1,...,d\}.
 \end{eqnarray*}
Consequently, in this case the operator $B_K$ generated by the differential expression
$\Delta+k\delta(x-x^0)$ is defined correctly. Moreover, by the Theorem \ref{B_{M} oper2} such operator is invertible from $L_2(\Omega)$ into $D(B_K).$

\begin{definition}\label{selfadjoint}An invertible restriction $B_{K}$ of the maximal operator $B_{M}$ is called self-adjoint respect to the boundary form
\begin{equation}\label{bound form}<B_{M}w,v>-<w,B_{M}v>, \,\ \forall w,v\in D(B_{M}),
\end{equation}
if for all $w$ and $v$ in $D(B_{K})$ the following equality holds
$$<B_{K}w,v>=<w,B_{K}v>.$$
\end{definition}
We select self-adjoint operators respect to the previous boundary form \eqref{bound form} from the set of invertible restrictions $B_{K}$ which described in the Theorem \ref{B_{M} oper2}.
\begin{theorem}\label{B_alpha}Let $\alpha$ be a set consisting of complex numbers $\alpha_{0},\alpha_{1},...,\alpha_{d}.$ An operator $B_{\alpha}$ corresponding to the boundary value problem
\begin{align*} &B_{M}w(x)=f(x), \,\,\ x\in\Omega_{0},\\
&w\mid_{\partial\Omega}=0,\\
&\gamma_{i}(w)=\alpha_{i}\cdot\beta_{i}(w), \,\,\ j=0,1,2,...,d,
\end{align*}
is an invertible restriction of the maximal operator $B_{M}.$

Moreover, the operator $B_{\alpha}$ is self-adjoint respect to
the boundary form \eqref{bound form}.
\end{theorem}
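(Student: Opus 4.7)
\medskip

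\noindent\textbf{Proof plan.} My plan is to deduce Theorem \ref{B_alpha} from Theorems \ref{B_{M} oper2} and \ref{Boundary formula} by realising $B_\alpha$ as the operator $B_K$ associated with a specific $K\in\mathcal{K}_{d+1}$. The first step is to observe, via Lemma \ref{rep of h}, that for any $w\in D(B_M)$ the regular part $w_0\in W_2^2(\Omega)$ is the unique Dirichlet solution of $\Delta w_0=B_M w$ with zero boundary data, so that $w_0(x)=\int_\Omega G(x,\xi)\,(B_M w)(\xi)\,d\xi$. Hence the traces $\beta_0(w)=w_0(x^0)$ and $\beta_i(w)=\partial_{x_i}w_0|_{x=x^0}$ depend only on $f:=B_M w$, and they are linear continuous functionals of $f$ given explicitly by
\begin{align*}
\beta_0(w)&=\int_\Omega G(x^0,\xi)\,f(\xi)\,d\xi, &
\beta_i(w)&=\int_\Omega \frac{\partial G(x,\xi)}{\partial x_i}\Big|_{x=x^0} f(\xi)\,d\xi.
\end{align*}

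Second, I would set $C_j(f):=\alpha_j\beta_j(w)$ using the formulas above, and take $K$ to be the rank-$(d+1)$ operator of the form \eqref{2.8} built from these functionals. Then $K\in\mathcal{K}_{d+1}$, and the identity $\gamma_j(KB_Mw)=C_j(B_Mw)=\alpha_j\beta_j(w)$ shows that the internal boundary conditions $\gamma_j(w)=\alpha_j\beta_j(w)$ are a special case of those handled by Theorem \ref{B_{M} oper2}. Applying that theorem delivers unique solvability of the boundary value problem in $W_{2,\gamma}^2(\Omega_0)$ together with the explicit integral-plus-singular representation of the solution, confirming that $B_\alpha=B_K$ is an invertible restriction of the maximal operator.

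Third, I would check self-adjointness by substituting the boundary conditions into the formula of Theorem \ref{Boundary formula}. For $w,v\in D(B_\alpha)$ one has $\gamma_i(w)=\alpha_i\beta_i(w)$ and $\gamma_i(v)=\alpha_i\beta_i(v)$, whence
\begin{align*}
\langle B_M w,v\rangle-\langle w,B_M v\rangle
&=\sum_{i=0}^d \bigl[\gamma_i(w)\overline{\beta_i(v)}-\beta_i(w)\overline{\gamma_i(v)}\bigr]\\
&=\sum_{i=0}^d\bigl[\alpha_i\beta_i(w)\overline{\beta_i(v)}-\beta_i(w)\,\overline{\alpha_i}\,\overline{\beta_i(v)}\bigr]\\
&=\sum_{i=0}^d(\alpha_i-\overline{\alpha_i})\,\beta_i(w)\,\overline{\beta_i(v)},
\end{align*}
which vanishes as soon as each $\alpha_i$ is real, yielding $\langle B_\alpha w,v\rangle=\langle w,B_\alpha v\rangle$ in the sense of Definition \ref{selfadjoint}.

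The main obstacle is not analytical but organisational: matching the abstract parameters of $\mathcal{K}_{d+1}$ to the concrete internal boundary conditions, and ensuring the continuity of the $\beta_j$ on $L_2(\Omega)$ (which relies on a Sobolev-type continuity of point values and first derivatives of the Dirichlet solution $w_0$ at $x^0$). The computation above also exposes a subtlety in the statement: self-adjointness with respect to the boundary form \eqref{bound form} forces $\alpha_i\in\mathbb{R}$, so the reference to \emph{complex} $\alpha_i$ should be read with this implicit reality condition — the invertibility portion of the theorem holds for complex $\alpha_i$, but the self-adjointness portion does not.
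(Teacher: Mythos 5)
Your proposal follows essentially the same route as the paper: the paper also realises $B_\alpha$ as the operator $B_K$ attached to the rank-$(d+1)$ operator $Kf=\sum_{i=0}^{d}\alpha_i<f,\varphi_i>\varphi_i$ (whose coefficients $<f,\varphi_i>$ are exactly your functionals $\beta_i$ built from the Dirichlet solution of $\Delta v_0=f$), then invokes Theorem \ref{B_{M} oper2} for invertibility and substitutes into Theorem \ref{Boundary formula} for the boundary form. The only substantive difference is that you carry out the boundary-form computation explicitly, obtaining $\sum_{i=0}^{d}(\alpha_i-\overline{\alpha_i})\beta_i(w)\overline{\beta_i(v)}$, and correctly observe that the self-adjointness claim forces $\alpha_i\in\mathbb{R}$; the paper's proof merely asserts that the form vanishes for arbitrary complex $\alpha_i$, so on this point your argument is the more precise one.
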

\begin{proof}
Let $\alpha=(\alpha_{0},\alpha_{1},...,\alpha_{d})$ be a set of complex numbers. Define an operator $K$ in the following form
$$Kf(x)=\sum_{i=0}^{d}\alpha_{i}<f,\varphi_{i}>\varphi_{i}(x).$$
Then, correct restriction $B_{K}$ coincides with the operator $B_{\alpha}.$ Consequently, $B_{\alpha}\subset B_{M}$ and there exists $B_{\alpha}^{-1}.$
Now, we need to show that $B_{\alpha}$ is a self-adjoint operator in the sense of Definition \ref{selfadjoint}.
According to the Theorem \ref{Boundary formula}, we have
$$<B_{M}w,v>-<w,B_{M}v>=\sum_{i=0}^{d}\gamma_{i}(w)\overline{\beta_{i}(v)}-\sum_{i=0}^{d}\beta_{i}(w)\overline{\gamma_{i}(v)}.$$

If we put the operator $B_{K}$ instead of $B_{M}$ in the previous formula, then we obtain
$$<B_{\alpha}w,v>-<w,B_{\alpha}v>=0,$$
for all $w,v\in D(B_{\alpha}).$
This concludes the proof.
\end{proof}

\begin{remark} In \cite{Ben, Kur}, it was described self-adjoint extensions of minimal operator which can be both invertible and non-invertible operators.
In the Theorem \ref{B_alpha}, it is selected only invertible self-adjoint operators.
\end{remark}

\section{Resolvents of correctly solvable pointwise perturbations}

In this section, we write out directly a representation of resolvents of boundary value problems from the Theorem \ref{B_{M} oper2}.
\begin{theorem}\label{B_{M} oper3} Let us given a linear continuous operator
$$K:L_{2}(\Omega)\rightarrow W_{2,\gamma}^{2}(\Omega_{0}).$$
Then the following problem of pointwise perturbation
\begin{align*} &(B_{M}-\lambda)u(x)=f(x), \,\,\ x\in\Omega_{0},\\
&u\mid_{\partial\Omega}=0,\\
&\gamma_{j}(u)=\gamma_{j}(KB_{M}u), \,\,\ j=0,1,2,...,d
\end{align*}
has a unique solution in the space $W_{2,\gamma}^{2}(\Omega_{0})$ for any $f\in L_2(\Omega)$ and for any complex-valued spectral parameter $\lambda$, save possibly for some countable set.
Moreover, for resolvent $(B_{K}-\lambda I)^{-1},$ we have
\begin{equation}\begin{split}\label{2.9}(B_{K}-\lambda I)^{-1}f(x)
&=(B_{0}-\lambda I)^{-1}f(x)\\
&+\gamma_0 \big(KB_{0}(B_{0}-\lambda
I)^{-1}f\big)B_K(B_K-\lambda I)^{-1}G(x,x^{0})\\
&+\sum_{j=1}^{d}\gamma_j(KB_{0}(B_{0}-\lambda I)^{-1}f)B_{K}(B_K-\lambda
I)^{-1}\frac{\partial G(x,x^{0})}{\partial t_j}.
\end{split}\end{equation}
\end{theorem}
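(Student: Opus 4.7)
The plan is to reduce the problem to a $(d{+}1)$-dimensional linear system in the boundary coefficients $c_j := \gamma_j(u)$, apply analytic Fredholm theory to obtain solvability outside a countable set, and then rearrange the explicit solution to match the right-hand side of \eqref{2.9}.

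First, I would use Lemma \ref{rep of h} to write any candidate $u \in W_{2,\gamma}^{2}(\Omega_0)$ in the form $u = u_0 + \sum_{i=0}^{d} c_i\varphi_i$, where $\varphi_0 := G(\cdot,x^0)$, $\varphi_i := \partial G(\cdot,x^0)/\partial\xi_i$ for $i\ge 1$, $u_0 \in W_2^2(\Omega)$, and $u_0\mid_{\partial\Omega}=0$. Since $B_M u = \Delta u_0$, the equation $(B_M - \lambda I)u = f$ rewrites as the Dirichlet problem
$$(\Delta-\lambda)u_0 = f + \lambda\sum_{i=0}^{d} c_i\varphi_i,\qquad u_0\mid_{\partial\Omega}=0,$$
which for $\lambda$ outside the Dirichlet spectrum $\sigma(B_0)$ is uniquely solvable, giving $u_0 = R_\lambda f + \lambda\sum_{i=0}^d c_i R_\lambda\varphi_i$ with $R_\lambda := (B_0-\lambda I)^{-1}$. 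The perturbative boundary conditions then read $c_j = \gamma_j(K B_M u) = \gamma_j(K(f+\lambda u))$, and after substituting the expression for $u$ and collecting in $c$, this becomes a $(d{+}1)\times(d{+}1)$ matrix equation $(I - \lambda N(\lambda))c = \beta(\lambda,f)$ whose matrix $N(\lambda)$ and vector $\beta(\lambda,f)$ are holomorphic in $\lambda \in \mathbb{C}\setminus\sigma(B_0)$.

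Next, since at $\lambda = 0$ the system degenerates to $c = \beta(0,f)$ (as guaranteed by Theorem \ref{B_{M} oper2}), $\det(I-\lambda N(\lambda))$ is a nontrivial holomorphic function on $\mathbb{C}\setminus\sigma(B_0)$, so its zero set is discrete; together with the countable $\sigma(B_0)$, the total exceptional set is countable. On its complement both $c$ and hence $u$ are uniquely determined, yielding existence and uniqueness in $W_{2,\gamma}^{2}(\Omega_0)$. To deduce \eqref{2.9} I would group the solution as $u = R_\lambda f + \sum_{i=0}^{d} c_i \bigl(\varphi_i + \lambda R_\lambda \varphi_i\bigr)$, use the standard identity $B_0 R_\lambda = I + \lambda R_\lambda$ to rewrite the $c_i$'s in the form $\gamma_i(K B_0 R_\lambda f)$ (plus correction terms absorbed by the system), and then recognize $\varphi_i + \lambda R_\lambda\varphi_i$ as $B_K(B_K-\lambda I)^{-1}\varphi_i$ via the operator identity $B_K(B_K-\lambda I)^{-1} = I + \lambda(B_K-\lambda I)^{-1}$ applied to the defect element $\varphi_i \in \ker B_M$.

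The principal difficulty is the rigorous treatment of the singular basis $\varphi_i$: each $\varphi_i$ lies in $D(B_M)$ but typically not in $L_2(\Omega)$ (already $\partial G/\partial\xi_i \notin L_2$ for $d\ge 2$ and $G\notin L_2$ for $d\ge 4$), so expressions such as $R_\lambda\varphi_i$ and $(B_K-\lambda I)^{-1}\varphi_i$ are not literal resolvent actions on $L_2$. They must be defined either through the closedness procedure spelled out in the remark following the definition of $B_M$, or via the boundary triple $\langle\mathbb{C}^{d+1},\Gamma_1,\Gamma_2\rangle$ from Theorem \ref{Boundary formula}, invoking the identity $B_M\varphi_i = 0$ to select the appropriate defect solutions. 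Making this identification precise is what converts the formal Krein-type rearrangement above into the bona fide resolvent formula \eqref{2.9}, and it is the main technical step in the proof.
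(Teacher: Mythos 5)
Your argument is sound in substance but takes a genuinely different route from the paper's proof of Theorem \ref{B_{M} oper3}. The paper does not solve a linear system at this stage: it writes down the explicit ansatz \eqref{2.11}, namely $u=u_0+\sum_{i=0}^{d}\gamma_i\big(K(f+\lambda u_0)\big)\,(\varphi_i+\lambda T_i)$ with $u_0=(B_0-\lambda I)^{-1}f$ and $T_i=(B_K-\lambda I)^{-1}\varphi_i$, and verifies directly—using $B_M\varphi_i=0$, $(B_K-\lambda I)T_i=\varphi_i$, and $B_0(B_0-\lambda I)^{-1}=I+\lambda(B_0-\lambda I)^{-1}$—that it satisfies the equation, the external condition, and the internal conditions; since $f+\lambda u_0=B_0(B_0-\lambda I)^{-1}f$ and $\varphi_i+\lambda T_i=B_K(B_K-\lambda I)^{-1}\varphi_i$, formula \eqref{2.9} is then immediate. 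Your reduction via Lemma \ref{rep of h} to the $(d+1)\times(d+1)$ system $(I-\lambda N(\lambda))c=\beta(\lambda,f)$ and the holomorphic-determinant argument is essentially the machinery the paper deploys only later (Theorem \ref{B_{K} oper} and the meromorphy remark, with $\Delta(\lambda)$ playing the role of your determinant). What your route buys is an actual proof of unique solvability off a countable set, which the paper's verification of an ansatz does not address, and its solvability part does not presuppose the objects $(B_K-\lambda I)^{-1}\varphi_i$; what the paper's route buys is that \eqref{2.9} appears with no further computation, at the price of assuming those objects make sense.

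Two caveats. First, your matching step misstates the key identity: $\varphi_i+\lambda R_\lambda\varphi_i$ is not $B_K(B_K-\lambda I)^{-1}\varphi_i$; the correct identity is $\varphi_i+\lambda(B_K-\lambda I)^{-1}\varphi_i=B_K(B_K-\lambda I)^{-1}\varphi_i$, and $(B_K-\lambda I)^{-1}\varphi_i\neq R_\lambda\varphi_i$. Passing from your representation $u=R_\lambda f+\sum_i c_i(\varphi_i+\lambda R_\lambda\varphi_i)$, whose coefficients $c_i=\big[(I-\lambda N(\lambda))^{-1}\beta\big]_i$ depend on the solved system, to \eqref{2.9}, whose coefficients $\gamma_i\big(KB_0(B_0-\lambda I)^{-1}f\big)$ are ``bare'' and whose vectors are the ``dressed'' elements $B_K(B_K-\lambda I)^{-1}\varphi_i$, requires exactly the Cramer-type computation carried out in Theorem \ref{B_{K} oper}; your phrase about correction terms being absorbed by the system is where that computation must actually be done. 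Second, you are right that the meaning of $R_\lambda\varphi_i$, $K\varphi_i$, and $(B_K-\lambda I)^{-1}\varphi_i$ for the non-$L_2$ elements $\varphi_i$ is the genuine technical issue; note, however, that the paper's own proof (and formula \eqref{2.9} itself) uses these objects without defining them, so on this point your proposal is not weaker than the published argument—it merely makes the gap explicit.
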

\begin{proof}
We have,
\begin{align*}B_{0}(B_{0}-\lambda I)^{-1}=I+\lambda(B_{0}-\lambda I)^{-1}.\end{align*}
Similarly, we obtain
\begin{align*}B_{K}(B_{K}-\lambda I)^{-1}=I+\lambda(B_{K}-\lambda I)^{-1}.\end{align*}
Set
\begin{align*} &u_{0}(x)=(B_{0}-\lambda I)^{-1}f(x)\\
&T_{0}(x)=(B_{K}-\lambda I)^{-1}G(x,x^{0})\\
&T_{j}(x)=(B_{K}-\lambda I)^{-1}\frac{\partial G(x,x^{0})}{\partial t_j}.
\end{align*}
We define a function $u$ by the formula
 \begin{equation}\begin{split}\label{2.11}
u(x)=u_{0}(x)+\gamma_{0}(K(f+\lambda u_{0}(x)))(G(x,x^0)+\lambda
T_{0}(x))\\
+\sum_{j=1}^{d}\gamma_{j}(K(f+\lambda u_{0}(x)))(\frac{\partial G(x,x^{0})}{\partial
t_j}+\lambda T_{j}(x)).
\end{split}\end{equation}
Since $u_{0}\in W_{2}^{2}(\Omega),$  $T_{j}\in D(B_{K}),$
$j=0,1,...,d,$ it follows that $B_{M}u_{0}=B_{0}u_{0},$ $B_{M}T_{j}=B_{K}T_{j},$ and hence, we have
\begin{align*}B_{M}u(x)-\lambda u(x)\\
&=B_0u_{0}(x)-\lambda u_{0}(x)+\gamma_{0}(K(f+\lambda u_{0}(x)))(B_{M}(G(x,x^{0}))\\
&-\lambda G(x,x^{0})+\lambda(B_{K}(T_{0})-\lambda T_{0}(x)))\\
&+\sum_{j=1}^{d}\gamma_{j}(K(f+\lambda u_{0}(x)))(B_{M}(\frac{\partial G(x,x^0)}{\partial
t_j})-\lambda \frac{\partial G(x,x^0)}{\partial t_j}\\
&+\lambda(B_{K}(T_j(x))-\lambda T_j(x))).
\end{align*}
From
\begin{align*} &B_{0}u_0(x)-\lambda u_0(x)=f(x),\\
&B_{M}(G(x,x^0))=0,\\
&B_{M}(\frac{\partial G(x,x^0)}{\partial t_j})=0,\\
&B_{K}(T_j(x))-\lambda T_j(x)= \frac{\partial G(x,x^0)}{\partial t_j},\\
&B_{K}(T_0(x))-\lambda T_0(x)=G(x,x^0)
\end{align*}
we obtain the required formula
$$B_Mu(x)-\lambda u(x)=f(x).$$
Now, we compute the trace of the function $u$ on the external boundary
$\partial\Omega$. Note that functions $u_0(x),$ $G(x,x^0)$
are solutions of Dirichlet problem and their traces on the external boundary
$\partial\Omega$ equal to zero. Then, the trace of function $\frac{\partial
G(x,x^0)}{\partial t_j}$ also vanishes on the external boundary
$\partial\Omega.$ Since $u$ is a linear combination of above functions, it follows that the its trace is equal to zero on $\partial\Omega.$ It remains to calculate the values of the functionals $\gamma_j(u-KB_{M}u),$
$j=0,1,…,d.$ It is clear that $\gamma_j(w)= \gamma_j(KB_{M}w),$ $j=0,1,…,d$ when
$w(x)=T_j(x),$ since $T_{j}\in
D(B_{K})$. Similarly, we have $\gamma_{j}(u_{0})=0,$
since $u_{0}\in W_{2}^{2}(\Omega).$ Applying functionals $\gamma_{s},$ $s=0,1,...,d,$ to the both sides of
\eqref{2.11} and by the preceding, we obtain
\begin{align*}
&\gamma_{s}(u)=\gamma_{0}(K(f+\lambda
u_{0}))\delta_{0s}+\delta_{js}\sum_{j=1}^{d}\gamma_{j}(K(f+\lambda u_{0}))\\
&+\lambda\gamma_{0}(K(f+\lambda
u_{0})\gamma_{s}(KB_{K}T_{0})+\lambda\sum_{j=1}^{d}\gamma_{j}(K(f+\lambda
u_{0}))\gamma_{s}(KB_{K}T_{j}).
\end{align*}
On the other hand, since $B_{M}G=0,$ $\gamma_{s}(KB_{M}G)=0,$
$\gamma_{s}(KB_{M}\frac{\partial G}{\partial t_{j}})=0,$ it follows that
\begin{align*}&\gamma_{s}(KB_{M}u)=\gamma_{s}(K(f+\lambda
u_{0}))+\lambda\gamma_{0}(K(f+\lambda u_{0}))\gamma_{s}(KB_{K}T_{0})\\
&+\lambda\sum_{j=1}^{d}\gamma_{j}(K(f+\lambda u_{0}))\gamma_{s}(KB_{K}T_{j}).
\end{align*}
Thereby completing the proof of the Theorem \ref{B_{M} oper3}.
\end{proof}
\begin{remark} Theorem \ref{B_{M} oper3} says that to calculate the resolvent $(B_K-\lambda I)^{-1}$ for any $f,$ we need to know the values of $(B_K-\lambda
I)^{-1}$ at fixed elements $G(x,x^0),$ $\frac{\partial
G(x,x^0)}{\partial t_{j}},$ $j=1,2,...,d.$
\end{remark}
\begin{remark} Formulas \eqref{2.9} represent a generalization of the second Hilbert identity for the resolvent in the case, when $D(B_K)\neq D(B_0).$ 
 Similar identities to
\eqref{2.9} were studied in \cite{BS}.
\end{remark}
In particular, Theorem \ref{B_{M} oper3} implies that the difference of resolvents $(B_{K}-\lambda
I)^{-1}-(B_{0}-\lambda I)^{-1}$ is a finite rank operator, therefore, there exists finite trace
\begin{equation}\begin{split}\label{2.12} Tr\big((B_{K}-\lambda I)^{-1}-(B_{0}-\lambda
I)^{-1}\big)
&=\sum_{i=0}^{d}\gamma_{i}\big(KB_{0}(B_{0}-\lambda I)^{-1}B_{K}(B_{K}-\lambda
I)^{-1}\varphi_{i}\big)
\end{split}\end{equation}
where
$$\varphi_{0}(x)=G(x,x^{0}), \, \varphi_{i}(x)=\frac{\partial G(x,x^{0})}{\partial t_{i}},
\, i=1,...,d.$$
We will define below meromorphic function $\Delta(\lambda)$ and
prove an analogue of M.G. Krein's formula \cite{BY,GK}.
Set
\begin{align*} &\beta_{00}=\gamma_0(KB_0(B_0-\lambda I)^{-1}\varphi_{0}), \,
\beta_{j0}=\gamma_j (KB_0(B_0-\lambda I)^{-1}\varphi_{0})\\
&\beta_{0s}=\gamma_0\left(KB_0(B_0-\lambda I)^{-1}\varphi_{s}\right), \,
\beta_{js}=\gamma_j\left(KB_0(B_0-\lambda
I)^{-1}\varphi_{s}\right).
\end{align*}
Then we define characteristic determinant by the following formula
$$\Delta(\lambda)=(-1)^{d+1}\left[%
\begin{array}{cccc}
   \lambda\beta_{00}-1 & \lambda\beta_{10} & \cdots & \lambda\beta_{d0} \\
   \lambda\beta_{01} & \lambda\beta_{11}-1 & \cdots & \lambda\beta_{d0} \\
  \vdots & \vdots & \ddots & \vdots  \\
  \lambda\beta_{0d} & \lambda\beta_{1d} & \cdots & \lambda\beta_{dd}-1\\
\end{array}%
\right]_{d\times d}
$$
The determinant $\Delta(\lambda)$ is a perturbation determinant (see \cite[Chapter IV, p.156]{GK}).
\begin{theorem}\label{Krein's formula} Let $B_{K}$ and $B_{0}$ be operators defined as in the Theorem \ref{B_{M} oper3}. Then, the following equality holds
\begin{equation}\label{2.13}Tr\big((B_{K}-\lambda I)^{-1}-(B_{0}-\lambda I)^{-1}\big)=-\frac{d}{d
\lambda}\ln\big(\Delta(\lambda)\big).
\end{equation}
\end{theorem}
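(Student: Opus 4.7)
The plan is to rewrite \eqref{2.13} as an instance of Jacobi's identity $\frac{d}{d\lambda}\ln\det M(\lambda) = \mathrm{Tr}(M(\lambda)^{-1}M'(\lambda))$ applied to the $(d+1)\times(d+1)$ matrix $M(\lambda) := I-\lambda \mathbf{B}^{\top}$, where $\mathbf{B}(\lambda)=(\beta_{ij})_{0\le i,j\le d}$ is built from the quantities $\beta_{ij} = \gamma_i(KB_0(B_0-\lambda I)^{-1}\varphi_j)$ introduced just before the theorem. The first step is to identify $\Delta(\lambda)$ with a transparent determinant: the matrix defining $\Delta(\lambda)$ has $(i,j)$-entry $\lambda\beta_{ji}-\delta_{ij}$, hence it equals $\lambda \mathbf{B}^{\top} - I$; using $\det(\lambda \mathbf{B}^{\top}-I) = (-1)^{d+1}\det(I-\lambda \mathbf{B}^{\top})$ the prefactor $(-1)^{d+1}$ yields $\Delta(\lambda) = \det(I-\lambda \mathbf{B}^{\top}) = \det(I-\lambda \mathbf{B})$.

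Next I would set $u_i := B_K(B_K-\lambda I)^{-1}\varphi_i$ and $v_i := B_0(B_0-\lambda I)^{-1}\varphi_i$ for $i=0,1,\ldots,d$ and derive the linear relation
\[
u_i = v_i + \lambda\sum_{j=0}^{d}\beta_{ji}u_j,\qquad i=0,1,\ldots,d,
\]
obtained by specializing \eqref{2.9} to $f=\varphi_i$, substituting $(B_K-\lambda I)^{-1}\varphi_i = \lambda^{-1}(u_i-\varphi_i)$ and recognizing $\gamma_j(KB_0(B_0-\lambda I)^{-1}\varphi_i) = \beta_{ji}$. In matrix form this is $(I-\lambda \mathbf{B}^{\top})\mathbf{u} = \mathbf{v}$, which inverts to $u_i = \sum_k[(I-\lambda \mathbf{B}^{\top})^{-1}]_{ik}v_k$ off the (countable) zero set of $\Delta(\lambda)$. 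I would then verify the key identity
\[
\gamma_i\bigl(KB_0(B_0-\lambda I)^{-1}v_k\bigr) = \beta_{ik} + \lambda\beta_{ik}' = \frac{d}{d\lambda}(\lambda\beta_{ik}),
\]
by unpacking $B_0(B_0-\lambda I)^{-1} = I + \lambda(B_0-\lambda I)^{-1}$ in both factors and using $\frac{d}{d\lambda}(B_0-\lambda I)^{-1}=(B_0-\lambda I)^{-2}$; expansion of $(I+\lambda(B_0-\lambda I)^{-1})^2$ and the definition of $\beta_{ik}$ make the two sides coincide term by term.

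Substituting into the finite-rank trace formula \eqref{2.12} and rearranging the indices then gives
\[
\mathrm{Tr}\bigl((B_K-\lambda I)^{-1}-(B_0-\lambda I)^{-1}\bigr) = \sum_{i,k}[(I-\lambda \mathbf{B}^{\top})^{-1}]_{ik}\frac{d}{d\lambda}(\lambda\beta_{ik}) = \mathrm{Tr}\Bigl((I-\lambda \mathbf{B}^{\top})^{-1}\tfrac{d}{d\lambda}(\lambda \mathbf{B}^{\top})\Bigr),
\]
using $(\lambda \mathbf{B})'_{ik}=(\lambda \mathbf{B}^{\top})'_{ki}$ in the last equality. Applying Jacobi's formula to $M(\lambda)=I-\lambda \mathbf{B}^{\top}$ produces $\frac{d}{d\lambda}\ln\Delta(\lambda) = -\mathrm{Tr}((I-\lambda \mathbf{B}^{\top})^{-1}\tfrac{d}{d\lambda}(\lambda \mathbf{B}^{\top}))$, which is minus the previous display and establishes \eqref{2.13}. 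The main obstacle is verifying $\gamma_i(KB_0(B_0-\lambda I)^{-1}v_k) = \frac{d}{d\lambda}(\lambda\beta_{ik})$; once it is in place the remainder is linear algebra together with Jacobi's identity, and the finite-rank hypothesis needed to define the trace is supplied by Theorem \ref{B_{M} oper3}, which presents $(B_K-\lambda I)^{-1}-(B_0-\lambda I)^{-1}$ as an operator of rank at most $d+1$.
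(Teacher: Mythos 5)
Your proof is correct and follows essentially the same route as the paper: you derive the same linear system relating $B_K(B_K-\lambda I)^{-1}\varphi_i$ to $B_0(B_0-\lambda I)^{-1}\varphi_i$ from \eqref{2.9}, and you use the same operator identity $\frac{d}{d\lambda}\big(\lambda B_0(B_0-\lambda I)^{-1}\big)=\big(B_0(B_0-\lambda I)^{-1}\big)^2$ that underlies \eqref{2.17}. The only difference is presentational: you solve the system by the matrix inverse $(I-\lambda\mathbf{B}^{\top})^{-1}$ and invoke Jacobi's formula, whereas the paper solves it by Cramer's rule with the replaced-column determinants $M_s$, $N_s$ and differentiates $\Delta(\lambda)$ column by column.
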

\begin{proof}
Let us define functions
$$\psi_{s}(x)=B_{K}(B_{K}-\lambda I)^{-1}\varphi_{s}.$$
for any $s=0,1,...,d$
It is easy to see that for a fixed $s$ the function $\psi_{s}$ is a solution of the following problem
  $$B_{M}\psi_{s}(x)=\lambda\psi_{s}(x), \,
  \gamma_{j}(\psi_{s})-\gamma_{j}(KB_{M}\psi_{s})=\delta_{js}, \, j=0,1,...,d.$$
Let us compute $\psi_{s}(x).$ For this, we define vector-column
$$\overrightarrow{P}(x)=\left(B_{0}(B_{0}-\lambda
I)^{-1}\varphi_{0}(x),...,B_{0}(B_{0}-\lambda
I)^{-1}\varphi_{d}(x)\right)^{T}.$$ Fix $s$ and denote by $M_{s}(x)$ the value of the determinant, which is obtained from the determinant $\Delta(\lambda)$ by replacing $s'$th column with the column
$\overrightarrow{P}(x).$ It is easy to verify that $M_{s}(x)$ is a solution of the equality
$$B_{M}M_{s}(x)=\lambda M_{s}(x).$$
To compute the difference
$\gamma_{i}(M_{s})-\gamma_{j}(KB_{M}M_{s}),$  first we consider
\begin{align*}\\
&\gamma_{i}(B_{0}(B_{0}-\lambda
I)^{-1}\varphi_{s})-\gamma_{i}(KB_{M}B_{0}(B_{0}-\lambda
I)^{-1}\varphi_{s})\\
&=\gamma_{i}(\varphi_{s})+\lambda\gamma_{i}((B_{0}-\lambda
I)^{-1}\varphi_{s})\\
&-\gamma_{i}(KB_{M}\varphi_{s})-\lambda\gamma_{i}(KB_{M}(B_{0}-\lambda
I)^{-1}\varphi_{s})\\
&=\delta_{is}-\lambda\gamma_{i}(KB_{0}(B_{0}-\lambda
I)^{-1}\varphi_{s})=\delta_{is}-\lambda\beta_{is}.
\end{align*}

Obviously,
$\gamma_{i}(M_{s})-\gamma_{j}(KB_{M}M_{s})=-\Delta(\lambda)\delta_{is}$
Consequently, we have
\begin{equation}\label{2.14}
\psi_{s}(x)=-\frac{M_{s}(x)}{\Delta(\lambda)}.
\end{equation}
After some adjustment of \eqref{2.12}, we obtain

$$Tr((B_{K}-\lambda I)^{-1}-(B_{0}-\lambda
I)^{-1})=\sum_{i=0}^{d}\gamma_{i}(KB_{0}(B_{0}-\lambda I)^{-1}\psi_{i}).$$
It follows from \eqref{2.14} that
\begin{equation}\label{2.15}Tr((B_{K}-\lambda I)^{-1}-(B_{0}-\lambda
I)^{-1})=-\frac{1}{\Delta(\lambda)}\sum_{i=0}^{d}\gamma_{i}(KB_{0}(B_{0}-\lambda
I)^{-1}M_{i})
\end{equation}
Let us define a column vector
$$\overrightarrow{V}(x)=\left(\varphi_{0}(x),...,\varphi_{d}(x)\right)^{T}.$$
For fix $s$ again we denote by $N_{s}(x)$ the value of the determinant, which is obtained from the determinant $\Delta(\lambda)$ by replacing $s'$th column with the column $\overrightarrow{V}(x).$
Since $M_{s}=B_{0}(B_{0}-\lambda I)^{-1}N_{s},$ it follows from \eqref{2.15}
that
\begin{equation}\label{2.16}
Tr\big((B_{K}-\lambda I)^{-1}-(B_{0}-\lambda
I)^{-1}\big)=-\frac{1}{\Delta(\lambda)}\sum_{i=0}^{d}\gamma_{i}\big(KB^{2}_{0}(B_{0}-\lambda
I)^{-2}N_{i}\big)
\end{equation}
Finally, note that
\begin{equation}\label{2.17}\frac{d}{d
\lambda}\Delta(\lambda)=\sum_{i=0}^{d}\gamma_{i}(KB^{2}_{0}(B_{0}-\lambda
I)^{-2}N_{i}),
\end{equation}
since
 $$\frac{d}{d
\lambda}\big(\lambda B_{0}(B_{0}-\lambda I)^{-1}\big)=\big(B_{0}(B_{0}-\lambda
I)^{-1}\big)^{2}.$$ Then, the combination of \eqref{2.16} and \eqref{2.17} gives us the required formula \eqref{2.13} of
M.G. Krein. This completes the proof.
\end{proof}
Let $f\in L_{2}(\Omega),$ then it is convenient to define the following expressions
\begin{align*} &\beta_0(f)=\gamma_0(KB_0(B_0-\lambda I)^{-1}f), \,\
\beta_j(f)=\gamma_j(KB_0(B_0-\lambda I)^{-1}f),\\
&F(f)=\beta_0(f)G(x,x^0)+\sum_{j=1}^{d}\beta_j(f)\frac{\partial G(x,x^0)}{\partial
t_j},\\
&F_0=(B_0-\lambda
I)^{-1}G(x,x^0)+\beta_{00}G(x,x^0)+\sum_{j=1}^{d}\beta_{j0}\frac{\partial
G(x,x^0)}{\partial
t_j},\\
&F_s=(B_0-\lambda I)^{-1}\frac{\partial G(x,x^0)}{\partial
t_s}+\beta_{0s}G(x,x^0)+\sum_{j=1}^{d}\beta_{js}\frac{\partial
G(x,x^0)}{\partial t_j},\\
&Q(f,x,\lambda)=\left(%
\begin{array}{ccccc}
\beta_0(f) & \beta_1(f) & \cdots & \beta_d(f) & F(f) \\
  \lambda\beta_{00}-1 & \lambda\beta_{10} & \cdots & \lambda\beta_{d0} & F_{0} \\
  \vdots & \vdots & \ddots & \vdots & \vdots \\
  \lambda\beta_{0d} & \lambda\beta_{1d} & \cdots &\lambda\beta_{dd}-1 & F_{d} \\
\end{array}%
\right).
\end{align*}

The next theorem gives a representation of resolvent $(B_{K}-\lambda
I)^{-1}.$
\begin{theorem} \label{B_{K} oper} Let the assumptions of Theorem \ref{B_{M} oper3} hold. Then, the resolvent
$(B_K-\lambda I)^{-1}$ is a finite dimensional perturbation of the resolvent $(B_0-\lambda I)^{-1},$ which is represented by the following formula
$$(B_K-\lambda I)^{-1}f(x)=(B_{0}-\lambda
I)^{-1}f(x)+\frac{Q(f,x,\lambda)}{\Delta(\lambda)}.$$
\end{theorem}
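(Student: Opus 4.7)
The plan is to combine Theorem \ref{B_{M} oper3} (which already expresses the difference $(B_K-\lambda I)^{-1}-(B_0-\lambda I)^{-1}$ as a finite-rank operator) with the identity \eqref{2.14} obtained in the proof of Theorem \ref{Krein's formula}. By Theorem \ref{B_{M} oper3},
$$(B_K-\lambda I)^{-1}f(x) - (B_0-\lambda I)^{-1}f(x) = \sum_{s=0}^{d}\beta_s(f)\,B_K(B_K-\lambda I)^{-1}\varphi_s(x),$$
where $\beta_s(f)=\gamma_s(KB_0(B_0-\lambda I)^{-1}f)$, $\varphi_0=G(x,x^0)$, and $\varphi_s=\partial G(x,x^0)/\partial t_s$ for $s\ge 1$. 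Substituting $B_K(B_K-\lambda I)^{-1}\varphi_s=-M_s(x)/\Delta(\lambda)$ from \eqref{2.14} reduces the theorem to the purely determinantal identity
$$Q(f,x,\lambda)\;=\;-\sum_{s=0}^{d}\beta_s(f)\,M_s(x).$$

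To verify this, I would first rewrite $Q$ as a bordered determinant: a single row permutation (collecting a sign $(-1)^{d+1}$) puts $Q$ into the block form $\bigl(\begin{smallmatrix}M & \vec F \\ \vec\beta(f)^{T} & F(f)\end{smallmatrix}\bigr)$, where $M$ is the $(d+1)\times(d+1)$ characteristic matrix with entries $M_{sj}=\lambda\beta_{js}-\delta_{sj}$ (so that $\det M=(-1)^{d+1}\Delta(\lambda)$), $\vec F=(F_0,\dots,F_d)^{T}$, and $\vec\beta(f)=(\beta_0(f),\dots,\beta_d(f))^{T}$. The Schur-complement formula then writes $Q$ in terms of $\det M$, $F(f)$, and $M^{-1}\vec F$. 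On the other side, Cramer's rule applied to $M_s$ (which is $\Delta(\lambda)$ with its $s$-th column replaced by $\vec P=(B_0(B_0-\lambda I)^{-1}\varphi_0,\dots,B_0(B_0-\lambda I)^{-1}\varphi_d)^{T}$) yields $M_s=\Delta(\lambda)(M^{-1}\vec P)_s$, so $\sum_s\beta_s(f)M_s=\Delta(\lambda)\,\vec\beta(f)^{T} M^{-1}\vec P$.

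The bridge between the two expressions is the linear identity relating $\vec P$ to $\vec F$ and $\vec V=(\varphi_0,\dots,\varphi_d)^{T}$, namely $\vec P=\lambda\vec F-M\vec V$; this is obtained by inserting $\lambda(B_0-\lambda I)^{-1}=B_0(B_0-\lambda I)^{-1}-I$ into the definition of $F_s$ and recognising the characteristic-matrix entries $\lambda\beta_{js}-\delta_{sj}$ as the coefficients of $\varphi_j$ that emerge. Combined with the elementary observation $\vec\beta(f)^{T}\vec V=F(f)$, this converts the Schur-complement expression for $Q$ into $-\Delta(\lambda)\,\vec\beta(f)^{T} M^{-1}\vec P$, closing the identity. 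The finite-rank claim is then immediate, since $Q(f,x,\lambda)/\Delta(\lambda)$ is a linear combination of the $d+2$ fixed functions $F(f),F_0,\dots,F_d$ whose coefficients depend on $f$ only through the scalars $\beta_j(f)$.

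The main obstacle is pure sign bookkeeping: tracking the $(-1)^{d+1}$ factor in $\Delta(\lambda)=(-1)^{d+1}\det M$, the sign from the row permutation that puts $Q$ into bordered form, and the signs inherent in Cramer's rule when assembling $M_s$ from the cofactors of $M$. Once the algebraic identity $\vec P=\lambda\vec F-M\vec V$ is isolated and the sign conventions are aligned, the remainder is formal multilinearity of the determinant in its last column.
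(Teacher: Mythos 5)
Your reduction is sound up to the last step, and in spirit it matches the paper's own argument: by \eqref{2.9} the difference of resolvents is $\sum_{s=0}^{d}\beta_s(f)\,B_K(B_K-\lambda I)^{-1}\varphi_s$, and \eqref{2.14} turns this into $-\Delta(\lambda)^{-1}\sum_{s}\beta_s(f)M_s(x)$, which is just a repackaging of the elimination the paper performs by substituting $\varphi_s$ for $f$ in \eqref{2.9}. The genuine problem is the determinantal identity you then assert, namely $Q(f,x,\lambda)=-\sum_{s}\beta_s(f)M_s(x)$: this is not a matter of sign bookkeeping, and your own two computations do not meet. Write $b(f)=(\beta_0(f),\dots,\beta_d(f))^{T}$, $\mathbf F=(F_0,\dots,F_d)^{T}$, $V=(\varphi_0,\dots,\varphi_d)^{T}$, $P=(B_0(B_0-\lambda I)^{-1}\varphi_0,\dots,B_0(B_0-\lambda I)^{-1}\varphi_d)^{T}$, and let $M$ have entries $M_{sj}=\lambda\beta_{js}-\delta_{sj}$, so $\Delta(\lambda)=(-1)^{d+1}\det M$. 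The Schur complement applied to $Q$ as defined in the theorem gives $Q=\Delta(\lambda)\bigl(F(f)-b(f)^{T}M^{-1}\mathbf F\bigr)$, whereas Cramer's rule together with your (correct) bridge identity $P=\lambda\mathbf F-MV$ and $b(f)^{T}V=F(f)$ gives
\begin{equation*}
-\sum_{s=0}^{d}\beta_s(f)M_s=-\Delta(\lambda)\,b(f)^{T}M^{-1}P=\Delta(\lambda)\bigl(F(f)-\lambda\,b(f)^{T}M^{-1}\mathbf F\bigr).
\end{equation*}
The two expressions differ by the factor $\lambda$ multiplying $b(f)^{T}M^{-1}\mathbf F$, and this discrepancy does not cancel for generic $f$, $K$ and $\lambda$, so the identity you need does not ``close'' by multilinearity alone.

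The discrepancy is exactly where the quantity produced by \eqref{2.9} and the matrix $Q$ as printed diverge: eliminating $R_j=(B_K-\lambda I)^{-1}\varphi_j$ from $R_s=F_s+\lambda\sum_j\beta_{js}R_j$ and from $(B_K-\lambda I)^{-1}f-(B_0-\lambda I)^{-1}f=F(f)+\lambda\sum_j\beta_j(f)R_j$ yields $F(f)-\lambda\,b(f)^{T}M^{-1}\mathbf F$, i.e.\ the bordered determinant whose first row is $(\lambda\beta_0(f),\dots,\lambda\beta_d(f),F(f))$ --- consistent with the fact that in $\Delta(\lambda)$ the entries occur as $\lambda\beta_{js}-\delta_{sj}$. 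So to complete your plan you must either insert that factor of $\lambda$ in the first row of the bordered matrix (after which your Schur/Cramer argument does go through and gives a more explicit version of the paper's elimination), or explain why $b(f)^{T}M^{-1}\mathbf F=\lambda\,b(f)^{T}M^{-1}\mathbf F$, which is false in general. As written, the final step of your proposal fails, and dismissing it as ``pure sign bookkeeping'' hides precisely the point that needs to be resolved.
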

\begin{proof}
In the Theorem \ref{B_{M} oper3}, it is obtained a representation of the resolvent
$(B_K-\lambda I)^{-1}.$ We substitute consecutively
$G(x,x^0),$ $\frac{\partial
G(x,x^0)}{\partial t_s},$ $s=1,…,d,$ instead of $f(x)$ in the above representation.  As a result, we obtain a system of matrix-vector equations $$D\overrightarrow{P}=0,$$
where
\begin{align*} &\overrightarrow{P}=((B_{K}-\lambda I)^{-1}\varphi_{0},…,(B_{K}-\lambda
I)^{-1}\varphi_{d},-1)^{T},\\
&D=\left(%
\begin{array}{ccccc}
  \beta_0(f) & \beta_1(f) & \cdots & \beta_d(f) & F(f) \\
  \lambda\beta_{00}-1 & \lambda\beta_{10} & \cdots & \lambda\beta_{d0} & F_{0} \\
  \vdots & \vdots & \ddots & \vdots & \vdots \\
  \lambda\beta_{0d} & \lambda\beta_{1d} & \cdots & \lambda\beta_{dd}-1 & F_{d} \\
\end{array}%
\right).
\end{align*}
Since the previous system of linear algebraic equations has a non-trivial solution, the determinant of the system equals to zero, i.e. $detD=0.$ This implies
$$(B_K-\lambda I)^{-1}f(x)=(B_{0}-\lambda
I)^{-1}f(x)+\frac{Q(f,x,\lambda)}{\Delta(\lambda)},$$
which represents a generalization of the well known Hilbert's identity for resolvent. This concludes the proof.
\end{proof}
\begin{remark} Theorem \ref{B_{K} oper} represents a generalization of the 2nd Hilbert's identity for resolvent, whenever operators $B_{K}$ and $B_{0}$ have different domains.
\end{remark}
\begin{remark} Since $\beta_{sk},$ $F_s$ depend on a spectral parameter $\lambda$ in a meromorphic way, it follows that
$\Delta(\lambda)$ is a meromorphic function of
$\lambda.$ Consequently, the resolvent $(B_K-\lambda I)^{-1}$ is also a meromorphic operator function and its number of poles at most countable. This fact agrees with the statement of the Theorem \ref{B_{M} oper3}.
\end{remark}
Formula \eqref{2.9} can be generalized in the following way. For $s=1,...,d,$
we denote by $B_{s}$ an operator corresponding to the boundary value problem
\begin{align*} &(B_{M}-\lambda I)u=f,\\
&u\mid_{\partial \Omega}=0,\\
&\gamma_{j}(u)=\gamma_{j}(KB_{M}u), \, j=0,1,...,s-1,\\
&\gamma_{k}(u)=0, \, k=s,s+1,...,d.
\end{align*}
Then, we can obtain the following formula for the resolvent of the operator $B_{s}.$
\begin{equation}\begin{split}\label{2.10}(B_{s}-\lambda I)^{-1}f=(B_{s-1}-\lambda I)^{-1}f
&+\gamma_{s-1}\big(KB_{s-1}(B_{s-1}-\lambda I)^{-1}f\big)\frac{\theta_{s-1}}{1-\lambda \gamma_{s-1}(K\theta_{s-1})}
\end{split}\end{equation}
where $\theta_{s}=B_{s}(B_{s}-\lambda I)^{-1}\varphi_{s},$ $\varphi_{0}(x)=G(x,x^0)$,
$\varphi_{s}(x)=\frac{\partial G(x,x^0)}{\partial \xi_s},$
$s=1,2,...,d.$ In case $s=d,$ this implies equality $B_{K}=B_{d}.$ It follows from
\eqref{2.10}, $s=1,...,d,$ that operators $B_{1},...B_{d-1}$ can be excluded and we obtain formula \eqref{2.9}.

\section{Change of initial part of spectrum of the Laplace operator}

In this section, we study some spectral properties of correctly-solvable pointwise perturbation. Resolvents of such operators described in Theorem \ref{B_{K} oper}. In Theorem \ref{B_{K} oper}, the perturbation determinant $\Delta(\lambda)$ has appeared. Perturbation determinants were studied in \cite[Chapter IV, p. 156]{GK}. We consider operator $B_{K}$ as a perturbation of the operator $B_{0}$ and the operator $B_{0}=\triangle$ corresponds to the Dirichlet problem \eqref{dirichlet}-\eqref{dirichlet cond}. It is well known that $B_{0}$ is a self-adjoint operator with discrete spectrum. We denote the eigenvalues of the operator $B_0$ and the corresponding eigenvectors by $\{\mu_n\}_{n\geq 1}$ and $\{\omega_n(x)\}_{n\geq 1},$ respectively. It is known that the system of eigenvectors $\{\omega_n(x)\}_{n\geq 1}$ forms orthonormal basis in the space $L_2(\Omega).$ Then, the elements $\beta_{ij}$ of perturbation determinant $\Delta(\lambda)$ are identified by formula
 \begin{equation}\label{determinan}
\beta_{ij}=\gamma_{i}\big(KB_{0}(B_{0}-\lambda I)^{-1}\varphi_{j}\big)=\sum_{n=1}^{\infty}\frac{\mu_{n}<\varphi_{j},\omega_{n}>}{\mu_{n}-\lambda}\cdot C_{in},
\end{equation}
where
$C_{in}=\gamma_{i}(K\omega_{n}).$
\begin{proposition}\label{L3} For all $n\in \mathbb{N},$  we have
$$<\varphi_{0},\omega_{n}>=\frac{2\omega_{n}(x^{0})}{\mu_{n}}.$$
\end{proposition}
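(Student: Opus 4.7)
The natural approach is Green's second identity for the pair $(G(\cdot,x^{0}),\omega_{n})$ on a domain punctured at $x^{0}$, combined with the Dirichlet eigenvalue equation $\Delta\omega_{n}=\mu_{n}\omega_{n}$, $\omega_{n}|_{\partial\Omega}=0$. First I would use the eigenvalue equation to reduce the claim to computing the integral $\int_{\Omega}G(x,x^{0})\,\overline{\Delta\omega_{n}(x)}\,dx$: since $\omega_{n}=\mu_{n}^{-1}\Delta\omega_{n}$,
\[
\langle\varphi_{0},\omega_{n}\rangle=\frac{1}{\mu_{n}}\int_{\Omega}G(x,x^{0})\,\overline{\Delta\omega_{n}(x)}\,dx,
\]
so it suffices to establish $\int_{\Omega}G\,\overline{\Delta\omega_{n}}\,dx=2\,\overline{\omega_{n}(x^{0})}$ (the eigenfunctions of the Dirichlet Laplacian can be taken real).

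Next I would excise a small ball $B_{\varepsilon}(x^{0})\subset\Omega$ and apply Green's second identity to $(G(\cdot,x^{0}),\omega_{n})$ on $\Omega_{\varepsilon}:=\Omega\setminus B_{\varepsilon}(x^{0})$. Three simplifications kill most of the resulting terms: $G(\cdot,x^{0})$ is harmonic on $\Omega_{\varepsilon}$, so the volume term $\int\omega_{n}\Delta G$ drops; both $G(\cdot,x^{0})$ and $\omega_{n}$ satisfy the homogeneous Dirichlet condition, so the outer boundary flux integrals on $\partial\Omega$ vanish; and the integral $\int_{\partial B_{\varepsilon}(x^{0})}G\,\partial_{\nu}\omega_{n}\,dS$ is $O(\varepsilon)$ by the asymptotics $G(x,x^{0})=O(|x-x^{0}|^{2-d})$ and the smoothness of $\omega_{n}$ near $x^{0}$. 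The only surviving term as $\varepsilon\to0$ is the inner-flux limit $-\lim_{\varepsilon\to 0^{+}}\int_{\partial B_{\varepsilon}(x^{0})}\omega_{n}(t)\,\partial_{\nu_{t}}G(t,x^{0})\,dS_{t}$, where $\nu$ is the outward normal of $\Omega_{\varepsilon}$.

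This limit I would evaluate by the rescaling $t=x^{0}+\varepsilon\eta$, mirroring the calculation of $\gamma_{0}(\varepsilon)=-1$ in the proof of Lemma~\ref{basic lem}: the factor $|t-x^{0}|^{1-d}$ in $\partial_{\nu}G$ exactly cancels the Jacobian $\varepsilon^{d-1}$, and continuity of $\omega_{n}$ at $x^{0}$ pulls the integrand down to $\omega_{n}(x^{0})$ times an angular integral over $S^{d-1}$. Using the normalization $C_{d}=-1/((d-2)\sigma_{d})$, the explicit decomposition $G=\varepsilon+k$ from Theorem~\ref{D prob}, and the orientation of the normal on $\partial B_{\varepsilon}$ relative to $\Omega_{\varepsilon}$, one extracts the numerical constant in front of $\omega_{n}(x^{0})$ and then divides by $\mu_{n}$.

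The main obstacle is pinning down exactly this constant $2$. A textbook application of the distributional identity $\Delta G=\delta(\cdot-x^{0})$ via Green's identity alone accounts only for the singular mass $\omega_{n}(x^{0})$; recovering the stated factor $2$ requires careful bookkeeping of the sign in the definition \eqref{functional} of $\gamma_{0}$, of the orientation of the inner normal on $\partial B_{\varepsilon}(x^{0})$, and of the way the representation of $\varphi_{0}$ into its regular and singular pieces given by Lemma~\ref{rep of h} contributes to the pairing. Matching these conventions with those of the ambient bilinear form is the only delicate point; once they are fixed, the rest is the routine angular integration already performed in Lemma~\ref{basic lem}.
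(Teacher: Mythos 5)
Your strategy is the same as the paper's: insert $\omega_{n}=\mu_{n}^{-1}\Delta\omega_{n}$, excise a small ball around $x^{0}$, apply Green's second identity to the pair $(\varphi_{0},\omega_{n})$, kill the outer boundary terms by the Dirichlet conditions and the term $\int_{\partial\Pi_{\delta}^{0}}\varphi_{0}\,\partial_{\nu}\omega_{n}\,dS=O(\delta)$, and evaluate the surviving inner flux by the rescaling already used in Lemma \ref{basic lem}. But as written your argument does not prove the stated identity: since $\varphi_{0}$ is harmonic on $\Omega\setminus\Pi_{\delta}^{0}$, your bookkeeping leaves exactly one surviving contribution, the inner-flux limit, which (with the normalization $\lim_{\delta\to0^{+}}\int_{\partial\Pi_{\delta}^{0}}\partial_{\nu_{\xi}}\varphi_{0}\,dS_{\xi}=1$) equals $\omega_{n}(x^{0})$, giving $\langle\varphi_{0},\omega_{n}\rangle=\omega_{n}(x^{0})/\mu_{n}$ — i.e. the constant $1$, not $2$. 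You acknowledge this yourself ("Green's identity alone accounts only for the singular mass") and defer the factor $2$ to unspecified sign/orientation/convention matching. That deferral is precisely the gap: in your setup every term has already been identified and all orientation choices are fixed by the requirement that the flux limit be $+1$, so no further bookkeeping of signs or normals can generate a second copy of $\omega_{n}(x^{0})$. A proof proposal that ends by conceding it cannot produce the claimed constant is not a proof of the proposition.

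For comparison, the paper's own computation gets $2$ as the sum of two separate contributions: it writes
$\lim_{\delta\to0^{+}}\mu_{n}^{-1}\int_{\Omega\setminus\Pi_{\delta}^{0}}\varphi_{0}\,\Delta\omega_{n}\,dx
=\mu_{n}^{-1}\omega_{n}(x^{0})
-\lim_{\delta\to0^{+}}\mu_{n}^{-1}\int_{\partial\Pi_{\delta}^{0}}\varphi_{0}\,\partial_{\nu_{\xi}}\omega_{n}\,dS_{\xi}
+\lim_{\delta\to0^{+}}\mu_{n}^{-1}\int_{\partial\Pi_{\delta}^{0}}\omega_{n}\,\partial_{\nu_{\xi}}\varphi_{0}\,dS_{\xi}$,
i.e. it keeps an interior ``delta-mass'' term $\omega_{n}(x^{0})/\mu_{n}$ \emph{in addition to} the inner boundary terms, and then the middle term is annihilated using $\lim_{\delta\to0^{+}}\int_{\partial\Pi_{\delta}^{0}}\frac{\xi_{j}-x_{j}^{0}}{|\xi-x^{0}|}\varphi_{0}\,dS_{\xi}=0$ while the last term contributes a second $\omega_{n}(x^{0})/\mu_{n}$ via the flux normalization. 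Your derivation, which treats $\Delta\varphi_{0}=0$ on the punctured domain and hence produces no interior term, cannot be reconciled with the proposition simply by fixing conventions; to match the paper you would have to justify (or challenge) that extra interior contribution on $\Omega\setminus\Pi_{\delta}^{0}$, and your text never addresses it. Indeed, your own (standard) computation suggests the distributional identity $\Delta G(\cdot,x^{0})=\delta(\cdot-x^{0})$ yields the singular mass only once, so the discrepancy with the stated factor $2$ is the central issue and must be confronted explicitly rather than absorbed into ``careful bookkeeping.''
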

\begin{proof}
Since
$$\lim_{\delta\rightarrow +0}\int_{\partial
\Pi_{\delta}^{0}}\frac{\partial \varphi_{0}}{\partial
\nu_{\xi}}dS_{\xi}=1$$
and
 $$\lim_{\delta\rightarrow +0}\int_{\partial
\Pi_{\delta}^{0}}\frac{(\xi_{j}-x_{j}^{0})}{|\xi-x^{0}|}\varphi_{0}dS_{\xi}=0,$$
it follows that
\begin{align*}
&<\varphi_{0},\omega_{n}>=\lim_{\delta\rightarrow
+0}\int_{\Omega/\Pi_{\delta}^{0}}\varphi_{0}\omega_{n}(x)dx=\lim_{\delta\rightarrow+0}\frac{1}{\mu_{n}}\int_{\Omega/\Pi_{\delta}^{0}}\varphi_{0}\Delta\omega_{n}(x)dx\\
&=\frac{\omega_{n}(x^{0})}{\mu_{n}}-\lim_{\delta\rightarrow
+0}\frac{1}{\mu_{n}}\int_{\partial \Pi_{\delta}^{0}}\varphi_{0}\frac{\partial
\omega_{n}}{\partial \nu_{\xi}}dS_{\xi}+\lim_{\delta\rightarrow +0}\frac{1}{\mu_{n}}\int_{\partial
\Pi_{\delta}^{0}}\omega_{n}(\xi)\frac{\partial
\varphi_{0}}{\partial \nu_{\xi}}dS_{\xi}\\
&=\frac{\omega_{n}(x^{0})}{\mu_{n}}-\sum_{j=1}^{d}\frac{\partial
\omega_{n}(x^{0})}{\partial \xi_{j}}\frac{1}{\mu_{n}}\lim_{\delta\rightarrow
+0}\int_{\partial\Pi_{\delta}^{0}}\frac{(\xi_{j}-x_{j}^{0})}{|\xi-x^{0}|}\varphi_{0}dS_{\xi}\\
&+\frac{\omega_{n}(x^{0})}{\mu_{n}}\lim_{\delta\rightarrow
+0}\int_{\partial \Pi_{\delta}^{0}}\frac{\partial
\varphi_{0}}{\partial
\nu_{\xi}}dS_{\xi}=\frac{2\omega_{n}(x^{0})}{\mu_{n}},
\end{align*}
This concludes the proof.
\end{proof}
For all $s=1,2,...,d$ we define operators whose resolvents are computed recurrently by
\begin{align*}(B_s-\lambda I)^{-1}f=(B_{s-1}-\lambda
I)^{-1}f+\gamma_{s-1}(KB_{s-1}(B_{s-1}-\lambda
I)^{-1}f)\cdot\frac{\theta_{s-1}}{1-\lambda\gamma_{s-1}(K\theta_{s-1})},\end{align*}
where $\theta_{i}=B_i(B_{i}-\lambda I)^{-1}\varphi_{i},$ $i=0,1,...,d.$
For $s=1,$ define perturbation determinant
\begin{align*}
&\Delta_{01}(\lambda)=1-\lambda\gamma_0(K\theta_0)=1-\lambda\gamma_0\big(KB_0(B_0-\lambda I)^{-1}\varphi_0\big)
\end{align*}
Since
$$\Delta_{01}(\lambda)=1-\lambda\sum_{n=1}^{\infty}\frac{\mu_{n}<\varphi_{0},\omega_{n}>}{\mu_{n}-\lambda}\cdot C_{0n},$$
it follows from Proposition \ref{L3} that
\begin{equation}\label{delta01}
\Delta_{01}(\lambda)=1-2\lambda\sum_{n=1}^{\infty}\frac{\omega_{n}(x^{0})}{\mu_{n}-\lambda}\cdot C_{0n}.
\end{equation}
Formula \eqref{delta01} implies following Theorem.
\begin{theorem}\label{L4} If one of the conditions $\omega_{N}(x^{0})=0$ or $\gamma_{0}(K\omega_{N})=0$ holds for some $N,$ then we have
$$\lambda_{N}(B_{1})=\mu_{N},$$
where $\lambda_{N}(B_{1})$ is the $N'$th eigenvalue of the operator $B_{1}.$
\end{theorem}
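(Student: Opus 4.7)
The plan is to identify $\mu_N$ as an eigenvalue of $B_1$ through pole analysis of the resolvent $(B_1-\lambda I)^{-1}$. The starting point is formula \eqref{2.10} at $s=1$,
$$(B_1-\lambda I)^{-1}f=(B_0-\lambda I)^{-1}f+\gamma_0\bigl(KB_0(B_0-\lambda I)^{-1}f\bigr)\frac{\theta_0(x,\lambda)}{\Delta_{01}(\lambda)},$$
with $\theta_0=B_0(B_0-\lambda I)^{-1}\varphi_0$ and $\Delta_{01}(\lambda)=1-\lambda\gamma_0(K\theta_0)$. First I would expand everything spectrally in the orthonormal basis $\{\omega_n\}$: using Proposition \ref{L3} a short computation gives $\theta_0(x,\lambda)=\sum_n 2\omega_n(x^0)\omega_n(x)/(\mu_n-\lambda)$ and $\gamma_0\bigl(KB_0(B_0-\lambda I)^{-1}f\bigr)=\sum_n \mu_n\langle f,\omega_n\rangle C_{0n}/(\mu_n-\lambda)$, while $\Delta_{01}$ is exactly the series \eqref{delta01}. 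Every $N$-th spectral summand carries the factor $\omega_N(x^0)$ or $C_{0N}=\gamma_0(K\omega_N)$, so under either hypothesis of the theorem those $N$-th terms vanish identically in $\lambda$, and $\theta_0$, $\Delta_{01}$, and the scalar coefficient above all become holomorphic at $\lambda=\mu_N$.

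Next I would probe the resolvent at $f=\omega_N$. The first summand contributes the simple pole $\omega_N/(\mu_N-\lambda)$. In the case $C_{0N}=0$, the scalar $\gamma_0\bigl(KB_0(B_0-\lambda I)^{-1}\omega_N\bigr)=\mu_N C_{0N}/(\mu_N-\lambda)$ is identically zero, so the second summand vanishes and $(B_1-\lambda I)^{-1}\omega_N=\omega_N/(\mu_N-\lambda)$. This matches the direct check that $\omega_N\in D(B_1)$: since $\omega_N\in W_2^2(\Omega)$ one has $\gamma_j(\omega_N)=0$ for every $j$, and the constraint $\gamma_0(\omega_N)=\gamma_0(KB_M\omega_N)=\mu_N C_{0N}$ reduces to $0=0$, whence $B_1\omega_N=\mu_N\omega_N$. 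In the other case $\omega_N(x^0)=0$ with $C_{0N}\neq 0$, both $\theta_0$ and $\Delta_{01}$ are regular at $\mu_N$; writing $T(x)=\theta_0(x,\mu_N)$ and $D=\Delta_{01}(\mu_N)$, the principal part of $(B_1-\lambda I)^{-1}\omega_N$ at $\mu_N$ becomes
$$\frac{\omega_N(x)+\mu_N C_{0N}T(x)/D}{\mu_N-\lambda}.$$
Because the $N$-th summand of the series for $T$ is killed by $\omega_N(x^0)=0$, the function $T$ lies in $\overline{\mathrm{span}}\{\omega_n:n\neq N\}$; the numerator therefore has $\omega_N$-Fourier coefficient equal to $1$ and cannot vanish.

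In both cases $(B_1-\lambda I)^{-1}$ has a pole at $\mu_N$, proving that $\mu_N$ is an eigenvalue of $B_1$. The hard part is the non-cancellation argument in the second case, which is settled by the orthogonality observation above; a secondary subtlety is the possibility that $D=\Delta_{01}(\mu_N)=0$, but this is a non-generic coincidence and in that event the second summand contributes a pole of order at least two, which only reinforces the conclusion. Finally, the labelling $\lambda_N(B_1)=\mu_N$ is justified by continuity: the hypothesis $\omega_N(x^0)=0$ or $C_{0N}=0$ is preserved under the scaling $K\mapsto tK$ for $t\in[0,1]$, so the eigenvalue located above is pinned to $\mu_N$ along the homotopy joining $B_1$ to $B_0$ and inherits the index $N$ from the Dirichlet spectrum of $B_0$.
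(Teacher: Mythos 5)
Your argument is essentially the paper's: the same resolvent formula \eqref{2.10} with $s=1$, the same spectral expansions via Proposition \ref{L3} and \eqref{delta01}, and a residue analysis at $\lambda=\mu_{N}$ split into the two cases, yielding the same eigenvectors $\omega_{N}$ (when $C_{0N}=0$) and $\omega_{N}+\mu_{N}C_{0N}\theta_{0}(\cdot,\mu_{N})/\Delta_{01}(\mu_{N})$ (when $\omega_{N}(x^{0})=0$); your added checks --- the direct verification that $\omega_{N}\in D(B_{1})$ and the observation that $\theta_{0}(\cdot,\mu_{N})$ is orthogonal to $\omega_{N}$ so the candidate eigenvector is nonzero --- are correct refinements of points the paper leaves implicit. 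The closing homotopy remark about the label $N$ is not a real argument, but the paper's own proof likewise only establishes that $\mu_{N}$ is an eigenvalue of $B_{1}$ (tacitly assuming $\Delta_{01}(\mu_{N})\neq 0$ and $\mu_{N}$ simple), so on the substantive content your proposal coincides with the paper's proof.
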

\begin{proof} First, we prove the case, when the eigenvalues $\mu_{N}$ of the operator are simple. The proof below will require slight modification for the case of multiple eigenvalues $ \mu_{N}.$  Since
$$(B_1-\lambda I)^{-1}f(x)=(B_{0}-\lambda
I)^{-1}f(x)+\gamma_{0}\big(KB_{0}(B_{0}-\lambda I)^{-1}f\big)\cdot\frac{\theta_{0}(x)}{\Delta_{01}(\lambda)},$$
it follows
from $\omega_{N}(x^{0})=0$ that the residue is calculated by formula
$$res_{\mu_{N}}(B_{1}-\lambda I)^{-1}f=-<f,\omega_{N}>\omega_{N}(x)+\frac{\theta_{0}(x,\lambda)}{\Delta_{01}(\lambda)}\mid_{\lambda=\mu_{N}}\cdot\mu_{N}<f,\omega_{N}>\cdot C_{0N}.$$
This implies that $\lambda=\mu_{N}$ is an eigenvalue of the operator $B_{1}$ corresponding to the following eigenvector
$$\omega_{N}(x)+\frac{\mu_{N}C_{0N}}{\Delta_{01}(\mu_{N})}\cdot\theta_{0}(x,\mu_{N}).$$
If $\gamma_{0}(K\omega_{N})=0,$ the residue is calculated by formula
\begin{eqnarray*}\begin{split} &res_{\mu_{N}}(B_{1}-\lambda I)^{-1}f=-<f,\omega_{N}>\omega_{N}(x)\\
&+\frac{\gamma_{0}\big(KB_{0}(B_{0}-\lambda I)^{-1}f\big)}{\Delta_{01}(\lambda)}\mid_{\lambda=\mu_{N}}\omega_{N}(x^{0})\omega_{N}(x).
\end{split}\end{eqnarray*}
It follows from the last formula that $\lambda=\mu_{N}$ is again an eigenvalue of the operator $B_{1}$ corresponding to the  eigenvector $\omega_{N}(x).$
This completes the proof.
\end{proof}
\begin{remark} Theorem \ref{L4} can be easily reformulated for the pair of $B_{s-1}$ and $B_{s}.$
\end{remark}
\begin{remark} It can be easily seen from the Theorem \ref{L4} that the operator $B_{1}$ is not a self-adjoint operator as $B_{0}.$ \end{remark}
Next we give an example of a boundary problem in a punctured domain, when the spectrum of the Dirichlet problem remains unchanged.
\begin{example}\label{Ex1} If the operator $K$ maps all eigenvectors $\omega_{n}(x)$ of the operator $B_{0}$ to the linear combination of derivatives $\frac{\partial}{\partial \xi_{j}}G(x,x^{0}),$  $j=1,2,...,d,$ then the assumptions of Theorem \ref{L4} hold. In other words, we have $C_{0n}=0$ for all $n.$ Consequently, the spectrum of the operator $B_{1}$ coincides with the spectrum of $B_{0}.$
\end{example}
This example can be easily modified so that only a finite number of eigenvalues of the operator $B_{1}$ differs from those of $B_{0}.$ Similar examples for the Sturm-Liouville operators are usually studied by the method of Crum \cite{C}, whenever the only finite part of the spectrum is changed.

Note that the regularized trace of perturbed operator $B_{1}$ in Example \ref{Ex1} equals to zero. Regularized traces have been studied by many authors, in particular \cite{GK63, GK64}. For the perturbation in the Example \ref{Ex1}, the conditions of results of \cite[Theorem 1]{GK63} (see also \cite{GK64}), in which their results are valid, are violated.

\section{Acknowledgment}

The work was partially supported by the grant No. AP05131292 of the Science Committee of the Ministry of Education and Science of the Republic of Kazakhstan.

\bigskip

\end{document}